\documentclass[12pt]{amsart}
\usepackage{amscd,amssymb,epsfig,mathtools}
\usepackage[usenames,dvipsnames]{color}
\textheight=9in
\textwidth=6.5in
\headheight6.15pt % amsart.cls has a bug which causes overfull
     % \vbox'es in the headline in many situations

\calclayout

\parskip=0pt plus 1pt % amsart.cls doesn't give any stretch to

      % to \parskip which can lead to underfull \vboxes

% set up file for both ordinary latex and pdflatex
 
\numberwithin{equation}{section} 

\newcommand{\ud}{\,d} 
 
\newcommand{\R}{\mathbb{R}}

\renewcommand\P{{\mathcal P}}

\newcommand\cof{\operatorname{cof}}

\renewcommand{\div}{\operatorname{div}}

\newtheorem{thm}{Theorem}[section] 
 
\newtheorem{lemma}[thm]{Lemma}

\newtheorem{rem}[thm]{Remark}

\def\whsq{\vbox to 5.8pt 
{\offinterlineskip\hrule 
\hbox to 5.8pt{\vrule height 
5.1pt\hss\vrule height 5.1pt}\hrule}}

\def\<{\langle} 
\def\>{\rangle} 
\hyphenation{de-ge-ne-ra-te}  
\hyphenation{vis-co-si-ty}   
\hyphenation{con-ti-nu-ous}   
\hyphenation{ma-the-ma-ti-cal}    
\hyphenation{de-ve-lo-ping} 
\hyphenation{hig-hest} 
\hyphenation{as-so-cia-ted} 
\hyphenation{mo-de-led} 
\hyphenation{si-mu-la-tion} 
\hyphenation{dif-fe-ren-tial} 
\hyphenation{boun-ded-ness} 
\hyphenation{va-ni-shing} 
\hyphenation{sym-me-tric} 
\hyphenation{fa-mi-ly} 
\hyphenation{ap-pro-xi-ma-tions}
\hyphenation{ap-pro-xi-ma-tion}
\hyphenation{ap-pro-xi-ma-ted}
\hyphenation{si-g-ni-fi-cant-ly}
\hyphenation{o-pe-ra-tor}
\hyphenation{qua-si-li-ne-ar}
\hyphenation{fle-xi-ble}
\hyphenation{nu-me-ri-cal}
\hyphenation{or-tho-go-nal} 
\hyphenation{ge-ne-ra-li-zed} 
\hyphenation{con-for-ming}
\hyphenation{re-so-lu-tion}
\hyphenation{me-tho-do-lo-gy}
\hyphenation{fa-sci-na-ting}
\hyphenation{se-cond}
\def\PP{{\mathop{{\rm I}\kern-.2em{\rm P}}\nolimits}} 
\def\FF{{\mathop{{\rm I}\kern-.2em{\rm F}}\nolimits}}   
\def\ZZ{{\mathop{{\rm I}\kern-.2em{\rm Z}}\nolimits}} 
 
% this allows shorter arrows in commutative diagrams 
\minCDarrowwidth1.5pc

%\pagestyle{empty}    
 
% Note that A4 is 210x297 mm,  
% while American letter size is 8.5x11 inches 
% The metric dimensions in following give margins of 30mm top and bottom 
% and 35mm at each side, and hence a text area of 140 by 227 mm. 
% The inches dimensions give margins of 1 inch top and bottom and 1 1/4 
% inches at each side, and hence a text region of 6 by 9 inches. 
 
% Vertical layout parameters 
% \topmargin plus one inch is the space at the top of the page before 
% any page heading (none in this document). 
% \headheight and \headsep are the space allowed for the page heading and 
% a gap between it and the text (unneeded here and so set to zero). 
% Thus the body of the text starts a ditsncce of 
% one inch plus \topmargin plus \headheight plus \headsep 
% from the top of the page.  
% The height of the text from there is \textheight 
%\setlength{\topmargin}{5mm} 
\setlength{\topmargin}{0in}%0.5  
\setlength{\headheight}{0pt}%0.5   
\setlength{\headsep}{15.5pt}%15.5  
\setlength{\textheight}{227mm} 
\setlength{\textheight}{9in} 
 
% Insert the following amount of extra space between paragraphs. 
\setlength{\parskip}{0.8ex} 
 
% With option doublespace, the following parameter is the factor by 
% which the line spacing is increased (I think that 1.2 looks nice) 
%\setstretch{1.2} 
 
% Horizontal layout parameters 
% \oddsidemargin and \evensidemargin set the length of the left hand 
% margin on odd and even numbered pages, so they are made equal here, 
% and controlled by adjusting \sidemargin. 
% Note that the actual margin wifths are one inch greater than this 
% value, as with the topmargin.   
% \textwidth is the length of the following text, so to have balanced 
% margins, \textwidth plus twice (\sidemargin plus one inch) 
% should equal the paper width. 
 
\newlength{\sidemargin} 
\setlength{\sidemargin}{0.25in} 
\setlength{\oddsidemargin}{\sidemargin} 
\setlength{\evensidemargin}{\sidemargin} 
\setlength{\textwidth}{6in} 
 
% Turn off the indentation of the first line of paragraphs, in favour of 
% the vertical paragraph separation set above. 
\setlength{\parindent}{0em} 
 
\begin{document}
\title[]{
%Standard Lagrange elements for the numerical resolution of the elliptic Monge-Amp\`ere equation: Alexandrov solutions
Pseudo transient continuation and time marching methods for Monge-Amp\`ere type equations}

%\thanks{Not for dissemination, September 3, 2012 }
%\thanks{The author was supported in part by a 2009-2013 Sloan Foundation Fellowship. }
\author{Gerard Awanou}
\address{Department of Mathematics, Statistics, and Computer Science, M/C 249.
University of Illinois at Chicago, 
Chicago, IL 60607-7045, USA}
\email{awanou@uic.edu}  
\urladdr{http://www.math.uic.edu/\~{}awanou}

\maketitle

\begin{abstract}
We present two numerical methods for the fully nonlinear elliptic Monge-Amp\`ere equation.
The first is a pseudo transient continuation method and the second is a pure pseudo time marching method. 
The methods are proven to converge to a strictly convex solution of a natural discrete variational formulation with $C^1$ conforming approximations. %We give numerical evidence that they are also able to capture non smooth solutions of the two dimensional elliptic Monge-Amp\`ere equation.% even in the 
%degenerate case.% of the degenerate Monge-Amp\`ere equation, the time marching method appears also to compute the viscosity solution.
The assumption of existence of a strictly convex solution to the discrete problem is proven for smooth solutions of the continuous problem and supported by numerical evidence for non smooth solutions.
\end{abstract}

\section{Introduction}
We are interested in numerical solutions of the fully nonlinear elliptic Monge-Amp\`ere equation
\begin{equation}
\det  D^2 u = f  \ \text{in} \ \Omega, \quad u=g \ \text{on} \ \partial \Omega,
\label{m1}
\end{equation}
on a convex bounded domain $\Omega$ of $\R^n, n=2,3$ with boundary $\partial \Omega$. % and Dirichlet boundary conditions $u=g$
The unknown $u$
is a real valued function and 
 $f,g$ are given functions with $f>0$ in the non degenerate case and $f \geq 0$ in the degenerate case.
%The domain $\Omega \subset \mathbb{R}^n$ is a convex domain with %a smooth or piecewise smooth
%polygonal boundary $\partial \Omega$.
We will also assume that $f \in C(\Omega)$ and  $g$ %convex %on any line segment in $\partial \Omega$ and hence 
in $C(\partial \Omega)$. %For $f \geq 0$  a solution is either  convex or  concave. It is therefore not restrictive to consider only convex solutions of \eqref{m1}.

Starting with \cite{Benamou2000,Dean2006}, interest has grown for finite element methods which are able to capture non smooth solutions
of second order fully nonlinear equations. For smooth solutions, the problem was studied in the context of semiconforming  $C^1$ finite elements by  B$\ddot{\text{o}}$hmer \cite{Bohmer2008,Bohmer2010} on both smooth and polygonal domains. B$\ddot{\text{o}}$hmer addressed general fully nonlinear elliptic equations for the first time. Brenner et al \cite{Brenner2010b} used Lagrange elements and interior penalty terms on smooth domains. Non smooth solutions can be handled with finite elements in the context of the vanishing moment methodology \cite{Feng2009}, a singular perturbation of \eqref{m1}.  
Proven convergence methods for non smooth solutions include the work of Oliker and Prussner \cite{Oliker1988}, Feng and Neilan \cite{Feng2012} for radial viscosity solutions in the finite element context and the work of Oberman \cite{Oberman2008} who addressed in general functions of the eigenvalues of the Hessian in the context of monotone finite difference methods.  For recent developments we refer to \cite{Feng2013}.

In this paper we give  numerical 
evidence that $C^1$ conforming approximations of a natural variational formulation of \eqref{m1} converge for non smooth solutions of the two dimensional problem. This is achieved by discretizing new iterative methods we introduce. We establish the convergence of the iterative methods under the assumption that the discrete problem has a strictly convex solution. We prove that such a solution exists  when \eqref{m1} has a smooth strictly convex solution. We do not assume that \eqref{m1} has a smooth solution for our iterative methods to converge. The existence of a convex solution to the discrete problem in the general case and the convergence of the discretization will be addressed in a subsequent paper. Even with such an existence result, it is still a non trivial task to solve the discrete nonlinear systems in situations where \eqref{m1} has a non smooth solution. This paper addresses this issue. %We refer to \cite{Feng2013} for a review of numerical methods for fully nonlinear equations of Monge-Amp\`ere type.

The main technical innovation of this paper is the proof that in the context of $C^1$ conforming approximations, discrete  functions near a strictly convex solution are strictly convex. This explains why convexity did not need to be imposed explicitly in some previous studies. Newton's method remains the most appropriate iterative method for solving the discrete nonlinear equations when \eqref{m1} has a smooth convex solution. We give a new proof of convergence of Newton's method in the context of $C^1$ conforming approximations.

The results of this paper extend easily to finite dimensional spaces of piecewise smooth $C^1$ functions provided that the approximation property \eqref{schum} and inverse estimates \eqref{inverse} below hold.

Our results can be described in the general context of discretizations by $C^1$ elements of iterative methods for a general nonlinear elliptic equation $F(u)=0$. In the case of the Monge-Amp\`ere equation, $F(u) = \det D^2 u-f$. We first describe the iterative methods at the continuous level. However, we will not address convergence at the continuous level. As an initial guess we take the solution of the Poisson equation $\Delta u = n f^{1/n}$ in $\Omega, u=g$ on $\partial \Omega$.

\subsection{Pseudo transient continuation method}
We assume that $F$ differentiable and consider the sequence of problems
\begin{align*}
(\nu L+F'(u_k)) (u_{k+1}-u_k) &= -F(u_k), 
\end{align*}
where $L$ is a linear operator which can be taken as $L=-I $ where $I$ is the identity operator or $L=\Delta$ where $\Delta$ is the Laplace operator and $\nu >0$ is a parameter. 
%The motivation to consider pseudo transient continuation methods stems from the observation that Newton's method fails to  converge to viscosity solutions. 
Pseudo transient continuation methods \cite{Kelley98} form a general class of methods for solving nonlinear singular equations.
In the case of the Monge-Amp\`ere equation, the method consists in solving the sequence of approximate problems
%Analysis of algorithms
\begin{align}\label{newtonseq}
\nu L \theta_k +(\text{cof} \ D^2 u_k) :D^2 \theta_k & = (f-f_k), \, f_k = \det D^2 u_k, \, \theta_k = u_{k+1}- u_k. 
\end{align}
Here $\cof A$ denotes the matrix of cofactors of the matrix $A$.

\subsection{Pseudo time marching method}
Given $\nu >0$, we consider the sequence of iterates
%Analysis of algorithms
\begin{equation}
-\nu \Delta u_{k+1} = -\nu \Delta u_{k} + F(u_k), \ u_{k+1}=g \ \mathrm{on} \ \partial \Omega. \label{m2}
\end{equation}
This can be interpreted as an Euler discretization of the pseudo time dependent equation
$
\frac{\partial \Delta u}{\partial t} + F(u) = 0, 
$
or as a Laplacian preconditioner of a simple pseudo time marching algorithm, \cite{GlowinskiICIAM07} 
$
u_{k+1} = u_ k -\frac{1}{\nu} \Delta^{-1} F(u_k ).
$
See also a remark in \cite{Loeper2005}.

To the author's best knowledge, this is the first time the pseudo transient continuation method and the time marching method are used to indicate numerically convergence to viscosity solutions of finite element type methods for the Monge-Amp\`ere equation.

\subsection{Advantages and comparison of the two methods}

The methods we propose can be used in the context of different types of discretizations allowing us in particular to treat more easily non-rectangular domains. The methods can be accelerated with fast Poisson solvers and multigrid methods. This latter property is even more striking for the time marching method as its implementation requires only having access to a multigrid Poisson solver.

%If one considers a discretization by finite difference methods of the iterative methods we propose, the advantage of \eqref{m3} over the method of Oberman \cite{Oberman2008}
%is that standard finite difference methods can be used. Potentially, the use of wide stencils can be avoided. 
%It was pointed out in \cite{Froese12a} that the simple pseudo time marching algorithm suffers from a severe time-step restriction, is not suitable for solutions which are not strictly convex and does not enforce convexity. The truncated time marching method \eqref{m3} numerically preserves convexity in two dimensions and for singular solutions appears to be more accurate than the pseudo transient methods for small values of the mesh size. 

Although the theory of the Monge-Amp\`ere equation has concentrated on convex solutions, one can equally focus on concave solutions. We found out that \eqref{m2} is better able to capture concave solutions. It is easy to implement, requiring only a Poisson solver. For example one can capture weak solutions of the Monge-Ampere equation by simply discretizing \eqref{m2} with the standard Lagrange finite elements. The time marching method can also be applied to fully nonlinear equations such as the Pucci equation where $F$ is not differentiable. 
 %Incidentally it was pointed out in \cite{Loeper05} that the time marching method 
 
 In summary the pseudo transient continuation methods are better for smooth solutions and singular solutions on a coarse mesh. Otherwise the method of choice is the time marching method.

\subsection{Organization of the paper}

We organize the paper as follows: in the second section we introduce some notation and prove the key result that discrete  functions near a strictly convex solution are strictly convex. We introduce the natural variational formulation of \eqref{m1} and state an existence and uniqueness result for the discrete problem. %The result can be inferred from the ones in \cite{Bohmer2008}. We give new proofs in section \ref{existence}. 
As a corollary the discrete variational problem has a convex solution when \eqref{m1} has a smooth convex solution. 
We study the pseudo transient continuation methods in section \ref{continuation}. A special case is Newton's method for which we prove a quadratic convergence rate. The time marching methods are studied in section \ref{time}. The last section is devoted to numerical results. We give a brief description of the spline element method which is used for the computations  and offer heuristics about why our methods appear to preserve convexity.

\section{Notation and preliminaries}

We use the standard notation for the Sobolev spaces $W^{k,p}(\Omega)$ with norms $||.||_{k,p}$ and semi-norm $|.|_{k,p}$. In particular,
$H^k(\Omega)=W^{k,2}(\Omega)$ and in this case, the norm and semi-norms will be denoted respectively by 
$||.||_{k}$ and $|.|_{k}$. %The norm of the Lebesgue space $L^p(\Omega)$ is simply denoted $||.||_{L^p}$.
For a vector field $v=(v_i)_{i=1,\ldots,d}$ with values in $W^{k,p}(\Omega)^n, 1 \leq p < \infty$, we set $||v||_{k,p}= (\sum_{i=1}^n ||v_i||_{k,p}^2)^{\frac{1}{2}}$ and a similar notation for $|v|_{k,p}$. In the case $p=\infty$, we set
$||v||_{k,\infty} = \max_{i=1,\ldots,n} ||v_i||_{k,\infty}$ with a similar notation for $|v|_{k,\infty}$. For matrix valued fields, the above notation is extended canonically.

We make the usual convention of denoting  constants by $C$ but will occasionally index some constants. %For constants which depend on the mesh size, we may use $c(h)$ or $C(h)$. 

We make the assumption 
 that the boundary of $\Omega$ is polygonal and
that the triangulation $\mathcal{T}$ is shape regular in the sense that there is a constant $C>0$ such that for any triangle $K$, 
%\cite{} ``is contained in a ball of radius $h/C$ and contains a ball of radius $Ch$.''
$h_K/\rho_K \leq C$, where $h_K$ denotes the diameter of $K$ and $\rho_K$ the radius of the largest ball contained in $K$. We also require the triangulation to be quasi-uniform in the sense that $h/h_{min}$ is bounded where $h$ and $h_{min}$ are the maximum and minimum respectively of $\{h_K, K \in \mathcal{T}_h \}$.

We define
\begin{equation}\label{sspaces}
V^h:=S^1_d( \mathcal{T} )=\{s \in C^1(\Omega), \ s|_t \in \P_d, \ \forall t \in \mathcal{T} \},
\end{equation}
where $\P_d$ denotes the space of polynomials of degree less than or equal to $d$.

In {\it two dimensions}, it is known that, \cite{Lai2007}, for $d \geq 5$ and $0 \leq l \leq d$, 
there exists a linear quasi-interpolation operator
$Q_h$ mapping $L_1(\Omega)$ into the spline space $S^1_d(\mathcal{T})$ and a constant $C$
such that if $v$ is in the Sobolev space $W^{l+1,p}(\Omega), 1 \leq p \leq \infty$
\begin{equation}
|| v -Q_h v ||_{k,p} \leq C_{} h^{l+1-k} |v|_{l+1,p}, \label{schum}
\end{equation}
for $0 \leq k \leq l$. If $\Omega$ is convex, the constant $C$ in \eqref{schum} depends only on $d,l$ and on the smallest
angle $\theta_{h}$ in $\mathcal{T}$. In the nonconvex case, $C$ depends only on 
the Lipschitz constant associated
with the boundary of $\Omega$. It is also known c.f. \cite{Dyer2003} that
the full approximation property for spline spaces holds on special triangulations for certain values of $d$.

In {\it three dimensions}, \eqref{schum} holds in general  for $d \geq 9$, c.f. \cite{Lai2007}. %although this lower bound can certainly be improved. 

Note that, by \eqref{schum},
\begin{equation}
||Q_h v||_{2,p} \leq C_{} ||v||_{2,p}, \quad  v \in W^{2,p}(\Omega), \label{stable}
\end{equation}
for all $p \geq 1$. 

We assume that the following inverse inequality holds 
%We first recall the following inverse inequality which may be viewed as a consequence of the assumption of
%uniform triangulation and of Markov inequality, \cite{Lai2007} p. 2,
\begin{equation}
||v||_{s,p} \leq C_{} h^{l-s+\text{min}(0,\frac{n}{p}-\frac{n}{q})} ||v||_{l,q}, \forall v \in V^h, \label{inverse}
\end{equation}
 for $0 \leq l \leq s, 1 \leq p,q\leq \infty$. For $C^1$ finite element spaces the result can be found in \cite{Brenner02}, Theorem 4.5.11. For the spline spaces
they may be viewed as a consequence of the assumption of uniform triangulation and of Markov inequality, \cite{Lai2007} p. 2. See also \cite{Bohmer2010}, section 4.2.6. 

%Passing from local estimates to global estimates can be done as in \cite{Lai98}.
%Recall also that $W^{2,p}(\Omega)$ is continuously embedded in $H^2(\Omega)$ for $p \geq 2$ and hence for $u \in W^{2,n}(\Omega), n \geq 2$
%\begin{equation}
%||u||_2 \leq ||u||_{2,n}. \label{embed}
%\end{equation}

%Recall that $W^{2,p}(\Omega)$ is continuously embedded in $H^2(\Omega)$ for $p \geq 2$ and hence for $u \in W^{2,n}(\Omega), n \geq 2$
%\begin{equation}
%||u||_2 \leq C ||u||_{2,n}. \label{embed}
%\end{equation}

\subsection{Variational formulations}
We first recall the divergence form of the determinant and the expression of its Fr\'echet derivative.

For two $n \times n$ matrices $M, N$, we recall the Frobenius product 
$$M:N = \sum_{i,j=1}^n M_{ij} N_{ij}. $$ 
In particular, for a matrix $A$, we have $(\cof A):A=\sum_{i,j=1}^n (\cof A)_{ij} A_{ij}$.
%We start with an elementary lemma. 
\begin{lemma} \label{det-lem} We have
\begin{equation}\label{detform}
\det D^2 u = \frac{1}{n}(\cof D^2 u): D^2 u =
\frac{1}{n} \div\big((\cof D^2 u) D u \big). 
\end{equation}
And for $F(u) = \det D^2 u $ we have
$$
F'(u) (w) =  (\cof D^2 u): D^2 w = \div \big((\cof D^2 u) D w \big), 
$$
 for $u,w$ 
sufficiently smooth.
\end{lemma}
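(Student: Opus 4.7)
The plan is to prove the two algebraic identities in (\ref{detform}) first, and then obtain the Fr\'echet derivative formulas by linearizing the first identity. The engine behind everything is a single classical fact about cofactor matrices plus the so-called row-divergence-free property of the cofactor of a Hessian.

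First I would establish the pointwise identity $\det D^2 u = \frac{1}{n}(\cof D^2 u) : D^2 u$. This is nothing but the Laplace expansion of the determinant along any row (or column): for an arbitrary $n\times n$ matrix $A$ one has $\sum_j A_{ij} (\cof A)_{ij} = \det A$ for each fixed $i$, and summing on $i$ gives $A:\cof A = n \det A$. Setting $A = D^2u$ yields the first equality.

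Next I would prove the \emph{row divergence-free property}
\[
\sum_{i=1}^n \partial_i (\cof D^2 u)_{ij} = 0, \qquad j=1,\ldots,n,
\]
for smooth $u$. This is the key (and only non-trivial) computation. I would expand $(\cof D^2 u)_{ij}$ as $(-1)^{i+j}$ times the minor obtained by deleting row $i$ and column $j$ of $D^2 u$, differentiate using the multilinearity of $\det$, and observe that each resulting term contains a mixed third partial derivative that is cancelled by a sign-opposite term obtained by swapping the two indices of differentiation; this cancellation uses the symmetry of third-order partial derivatives. (For $n=2$ this is a one-line check.) Granted this, the product rule yields
\[
\div\bigl((\cof D^2 u)\,Du\bigr) = \sum_{i,j} \partial_i(\cof D^2 u)_{ij}\, \partial_j u + \sum_{i,j} (\cof D^2 u)_{ij}\, \partial_i \partial_j u = (\cof D^2 u):D^2 u,
\]
which combined with the first step gives the second equality in (\ref{detform}).

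For the Fr\'echet derivative I would use Jacobi's formula: for smooth matrix-valued perturbations,
\[
\det(A + tB) = \det A + t\,(\cof A):B + O(t^2).
\]
Applying this with $A = D^2 u$ and $B = D^2 w$ and dividing by $t$ gives $F'(u)(w) = (\cof D^2 u):D^2 w$. The divergence form then follows by exactly the same product-rule computation as above, using the row divergence-free property with $w$ in place of $u$ in the gradient factor.

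The main (and really only) obstacle is establishing the row divergence-free identity for $\cof D^2 u$; everything else is bookkeeping. The proof is routine but it is the one place where the Hessian structure (i.e., symmetry of higher partials) is genuinely used.
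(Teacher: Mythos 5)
Your proposal is correct and follows essentially the same route as the paper: the Laplace-expansion identity $\det A=\frac1n(\cof A):A$, the product rule $\div\big((\cof D^2u)Du\big)=\big(\div\cof D^2u\big)\cdot Du+(\cof D^2u):D^2u$ together with the row divergence-free property of the cofactor matrix, and linearization (Jacobi's formula) for $F'(u)(w)$. The only difference is that you sketch a direct proof of $\div\cof D^2u=0$ via cancellation of mixed third partials, whereas the paper simply cites this fact from Evans; note also that this identity holds for the cofactor of any Jacobian $Dv$, so the Hessian structure is not strictly needed there.
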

\begin{proof}
Note that for any $n \times n$ matrix $A$, $\det A =  (\cof A):A/n$, 
where $\cof A$ is the matrix of cofactors of $A$. This follows from the row expansion definition of the determinant.

For any sufficiently smooth matrix field $A$ and vector field $v$,
 $\div A^T v = (\div A) \cdot v + A:D v$. 
Here the divergence of a matrix field is the divergence operator applied row-wise.
 If we put $v= D u$, then 
$\det D^2 u=  (\cof D^2 u):(D^2 u)/n = (\cof D v): (D v)/n $ and
$\div(\cof D v)^T v = \div (\cof D v) \cdot v + (\cof D v): D v.$
But $\div \cof D v =0$, c.f. for example \cite{Evans1998} p. 440. Hence 
since $D^2 u$ and $\cof D^2 u$ are symmetric matrices \eqref{detform} follows. The assertion about the Fr\'echet derivative of $F$ follows from the definition of the determinant as a multilinear map (e.g \eqref{detform}) and the definition of matrix of cofactors. See also \cite{Evans1998} p. 440.

\end{proof}

Using the divergence form of the determinant \eqref{detform} and integration by parts, one obtains
the variational formulation of \eqref{m1}  given by: find $u \in W^{2,n}(\Omega)$, $u=g$ on $\partial \Omega$ such
that
\begin{equation}
-\frac{1}{n} \int_{\Omega} (\cof D^2 u) D u \cdot D w \ \ud x  =
 \int_{\Omega} f w \ \ud x , \quad \forall w \in W^{2,n}(\Omega) \cap H_0^1(\Omega) \label{var1}.
\end{equation}
We show that for $u \in W^{2,n}(\Omega)$, \eqref{var1}  is well defined. 

{\it Case $n=2$}. For $n=2$, each entry of $\cof D^2 u$ consists of a second derivative $\partial^2 u/(\partial x_i \partial x_j), i,j=1,\ldots,n$. By H$\ddot{\text{o}}$lder's inequality,
\begin{align*}
\bigg| \int_{\Omega} (\cof D^2 u) D u \cdot D w \ \ud x\bigg| \leq C ||D^2u||_{0,2} ||Du||_{0,4} ||Dw||_{0,4}.
\end{align*}
Next for $u \in H^2(\Omega), \partial u/\partial x_i \in H^1(\Omega), i=1,\ldots,n$ and by Sobolev embedding, i.e. the embedding of $H^1(\Omega)$
in $L^q(\Omega)$ for $q \geq 1$ when $n=2$, %and $1\leq q \leq 6$ when $n=3$, 
the right hand side above is bounded by 
$C ||D^2u||_{L^2(\Omega)} ||u||_{H^2(\Omega)}$ $ ||w||_{H^2(\Omega)}$.

%The argument in three dimensions is similar to Remark 2.1 in \cite{Feng2009}.
{\it Case $n=3$}. For $n=3$, each entry of $\cof D^2 u$ involves the product of two second order derivatives. We have
by H$\ddot{\text{o}}$lder's inequality and Sobolev embedding, i.e.  the embedding of $H^1(\Omega)$
in $L^q(\Omega)$ for $1\leq q \leq 6$ when $n=3$, 
\begin{align*}
\bigg| \int_{\Omega} \frac{\partial^2 u}{\partial x_1^2} \frac{\partial^2 u}{\partial x_3^2} \frac{\partial u}{\partial x_1} 
\frac{\partial w}{\partial x_2} \ud x\bigg| & \leq || \frac{\partial^2 u}{\partial x_1^2} ||_{0,3} 
|| \frac{\partial^2 u}{\partial x_3^2} ||_{0,3}  || \frac{\partial u}{\partial x_1} ||_{0,6} 
|| \frac{\partial w}{\partial x_2} ||_{0,6} \\
& \leq ||u||_{2,3}^2 ||u||_2  ||w||_2.
\end{align*}
We conclude that for $n=3$, 
\begin{align*}
\bigg| \int_{\Omega} (\cof D^2 u) D u \cdot D w \, \ud x\bigg| \leq C ||u||_{2,3}^2 ||u||_2  ||w||_2.
\end{align*}
In summary for $n=2,3$, we may write
\begin{align}
\bigg| \int_{\Omega} (\cof D^2 u) D u \cdot D w \, \ud x \bigg| \leq C ||u||_{2,n}^{n-1} ||u||_2  ||w||_2. \label{detineq}
\end{align}

Put $V=W^{2,n}(\Omega)$ and $V_0 = W^{2,n}(\Omega) \cap H_0^1(\Omega)$. Note that $V^h$ given by \eqref{sspaces} satisfies 
$V^h \subset W^{2,n}(\Omega)$. Let  
$V^h_0 =V^h \cap H_0^1(\Omega)$ 
%Since $W^{2,p}(\Omega)$ is continuously embedded in $H^2(\Omega)$ for $p \geq 2$, 
and furthermore let $g_h$ be the interpolant in $V^h$ of a smooth extension of $g$.

We have the following conforming discretization of \eqref{var1}:
find $u_h \in V^h$, $u_h=g_h$ on $\partial \Omega$ such
that
\begin{equation}
-\frac{1}{n}\int_{\Omega} (\cof D^2 u_h) D u_h \cdot D w_h \ \ud x  =
 \int_{\Omega} f w_h \ \ud x , \quad \forall w_h \in V_0^h. \label{var1h}
\end{equation}

We now present a number of preliminary results.
\subsection{Preliminary results}
%We first state a result on the well posedness of \eqref{var1h}. The $H^1$ error estimate can be inferred from \cite{Bohmer2008}. We give a new proof in section \ref{existence}. 
We first prove that when \eqref{m1} has a smooth strictly convex solution, \eqref{var1h} has a unique local solution and we give error estimates. After introducing tools for computations with determinants, we show that a finite element function sufficiently close to a strictly convex finite element function is also strictly convex. It follows that the solution $u_h$ of \eqref{var1h} is strictly convex when \eqref{m1} has a smooth strictly convex solution.
\begin{thm} \label{errorest}
Let $3\leq l \leq d$ and assume that $u \in W^{l+1,\infty}(\Omega) $ is a strictly convex function, that $\Omega$ is convex with a polygonal boundary and that the spaces $V^h$ have the optimal approximation property \eqref{schum} and satisfy the inverse estimates \eqref{inverse}.
Then the problem \eqref{var1h} has a unique solution $u_h$ for $h$ sufficiently small and we have the error estimates
\begin{align*}
||u-u_h||_2 & \leq C h^{l-1} \\
 ||u-u_h||_1 & \leq C h^l \\
||u-u_h||_{0} & \leq  C h^{l+1}  + C( h^{l-1-\frac{n}{2} } +  C  )^{n-2}h^{2l-1-\frac{n}{2} },
\end{align*} 
with a constant $C$ which depends on $u$ but is independent of $h$.
%Moreover if $l \geq 3$, $||u-u_h||_{L^2} = O (h^{l+1}) \, \text{for $h$ sufficiently small}$.
%and for $n=2$, $$.
\end{thm}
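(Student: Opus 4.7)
The plan is to prove existence and the $H^2$ estimate by a fixed-point argument in an $H^2$-ball around the quasi-interpolant $\tilde u := Q_h u \in V^h$, and then derive the $H^1$ and $L^2$ estimates via Aubin--Nitsche duality. First I would use \eqref{schum} to obtain $\|u-\tilde u\|_{2,\infty}\leq Ch^{l-1}$ in two dimensions (and a $W^{2,n}$ analogue combined with \eqref{inverse} in three dimensions). Combining this with the paper's key innovation---that $C^1$-conforming discrete functions $H^2$-close to a strictly convex smooth function are themselves strictly convex---ensures that $\cof D^2 \tilde u$ is uniformly positive definite, so that the linearized bilinear form
$$
a_{\tilde u}(v,w):= -\frac{1}{n}\int_{\Omega} (\cof D^2 \tilde u)\, Dv\cdot Dw\,\ud x
$$
is coercive on $V_0^h$.

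Next I would introduce a map $\Psi\colon B_r(\tilde u)\subset V^h\to V^h$ whose fixed points coincide with solutions of \eqref{var1h}: given $v_h$, let $\Psi(v_h)=w_h$ with $w_h=g_h$ on $\partial\Omega$ solve
\begin{equation*}
a_{\tilde u}(w_h-\tilde u,\phi_h) \;=\; a_{\tilde u}(v_h-\tilde u,\phi_h) \;-\; \langle F_h(v_h),\phi_h\rangle,\qquad \forall\phi_h\in V_0^h,
\end{equation*}
where $F_h(v_h)$ denotes the residual of \eqref{var1h} at $v_h$. The consistency residual $\langle F_h(\tilde u),\phi_h\rangle$ is of size $h^{l-1}$ in the dual norm by \eqref{schum} together with \eqref{detineq}, while the quadratic (cubic in three dimensions) structure of $\cof D^2$ is controlled by \eqref{detineq} combined with the inverse estimates \eqref{inverse}, which convert the $W^{2,n}$ bounds available via Sobolev embedding into $H^2$ bounds on $V^h$ at a controlled cost in powers of $h$. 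For $h$ small, $\Psi$ maps $B_r(\tilde u)$ into itself with radius $r=O(h^{l-1})$ and is a strict $H^2$-contraction there; Banach's theorem then delivers the unique discrete solution $u_h$ with $\|u_h-\tilde u\|_2\leq Ch^{l-1}$, and the triangle inequality yields $\|u-u_h\|_2\leq Ch^{l-1}$.

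For the $H^1$ and $L^2$ bounds I would run an Aubin--Nitsche argument on the adjoint linearized equation $\div((\cof D^2 u)Dz)=\varphi$ with $z|_{\partial\Omega}=0$; convexity and polygonality of $\Omega$ furnish $H^2$ regularity of $z$ when $\varphi\in L^2$ and $H^3$ regularity when $\varphi\in H^{-1}$, and approximating $z$ in $V^h$ produces the extra powers of $h$ claimed. The cross terms involving $\cof D^2 u - \cof D^2 u_h$ are expanded multilinearly and estimated using the $H^2$ error already in hand; the factor $(h^{l-1-n/2}+C)^{n-2}$ in the $L^2$ estimate records the cost of invoking \eqref{inverse} to bound $\|D^2 u_h\|_{0,\infty}$ by $\|D^2 u_h\|_{0,n}$ in the cubic three-dimensional case, and becomes trivial when $n=2$. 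The chief obstacle throughout is closing the contraction estimate: one must simultaneously preserve discrete strict convexity along the iterates (supplied by the innovation lemma) and absorb the nonlinear remainder into a small multiple of $\|v_h-\tilde u\|_2$, which in three dimensions requires a delicate balance between the approximation exponent in \eqref{schum} and the loss in \eqref{inverse}.
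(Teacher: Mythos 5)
Your architecture (Banach fixed point centered at $Q_h u$ for a coercive linearized form, then duality for the lower-order norms) is the same family of argument as the paper's, but two steps as you state them do not close. First, the contraction. Your linearized form $a_{\tilde u}$ is a second-order elliptic form, so Lax--Milgram controls $\Psi(v^{(1)})-\Psi(v^{(2)})$ only in $H^1$; to convert that into the claimed ``strict $H^2$-contraction'' you must pay an inverse estimate $h^{-1}$ on every application of $\Psi$. Quantitatively, for $v^{(1)},v^{(2)}$ in an $H^2$-ball of radius $r=O(h^{l-1})$ the $H^1$-contraction factor is of order $h^{-n/2}r=O(h^{l-1-n/2})$ (from $\|\cof D^2\tilde u-\cof D^2 v_t\|_\infty$ via \eqref{inverse}), so the $H^2$ factor is $O(h^{l-2-n/2})$: for the admissible case $l=3$ this is $O(1)$ with an uncontrolled constant when $n=2$ and blows up like $h^{-1/2}$ when $n=3$. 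Your scheme therefore only works for $l\geq 4$, while the theorem asserts $3\leq l\leq d$. The paper avoids this by running the whole fixed-point argument in the $H^1$ norm, in a ball of radius $\rho_1=O(h^l)$ (which is $\ll h^{1+n/2}$ precisely because $l\geq 3$, and this is also what keeps the iterates uniformly convex), obtaining $\|u-u_h\|_1\leq Ch^l$ directly from the ball radius, and only at the very end converting to $H^2$ by a single inverse estimate, losing one power once rather than at each iteration.

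Second, the duality step for the $H^1$ bound. On a convex \emph{polygonal} domain you only have $H^2$ elliptic regularity for the dual problem $\div((\cof D^2u)Dz)=\varphi$; the ``$H^3$ regularity when $\varphi\in H^{-1}$'' you invoke to upgrade the $H^2$ error to an $H^1$ error is not available, so your $\|u-u_h\|_1\leq Ch^l$ is unsupported. In the paper no duality is needed for $H^1$ (it comes from the fixed-point ball), and duality with only $H^2$ regularity is used solely for the $L^2$ estimate. There, note also that $u-u_h$ does not vanish on $\partial\Omega$ (it equals $u-Q_hu$ there since $u_h=g_h$), so the duality identity carries a boundary term that must be estimated by a trace inequality; your sketch omits this. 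Your reading of the factor $(h^{l-1-\frac n2}+C)^{n-2}$ is also slightly off: it arises from bounding $\|u-u_h\|_{2,\infty}$ by $h^{-1-\frac n2}\|u-u_h\|_1$ inside the trilinear cofactor-difference term of the $L^2$ duality argument, not from an $L^\infty$--$L^n$ bound on $D^2u_h$.
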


\begin{proof}
The $H^1$ error estimate is given in \cite{Bohmer2008}, Theorems 5.1 and 8.7. See also \cite{Bohmer2010}. The $H^2$ error estimate follows from an inverse estimate. For the proof of the $L^2$ error estimate, the proof in \cite{Feng2009} can be adapted. The results of \cite{Brenner2010b,Brenner2010a} also give the error estimates in the theorem. They were given with the interior penalty formulation but the variational problems discussed there reduce to the one considered in this paper for $C^1$ finite element spaces. For another proof of the $H^1$ error estimate, we refer to \cite{Awanou-Std01}. 
\end{proof}

Next we give some preliminary results which are essential for computations with terms involving the determinant. 

We first recall the Mean Value Theorem for Banach spaces. Let $E$ and $F$ be Banach spaces 
and let us denote by $L(E,F)$ the space of continuous linear mappings from $E$ to $F$. Let also 
$X$ be an open subset of $E$ and let $F: X \to F$ be a differentiable map. If 
$F': X \to L(E,F)$ is continuous, $F$ is said to be of class $C^1$ and for all $a,x \in X$, we have
$$
F(x) = F(a) + \int_0^1 F'[(1-t) a + t x](x-a) \ud t.
$$

\begin{lemma} \label{cof-bound} 
For $n=2$ and $n=3$, and two matrix fields $\eta$ and $\tau$
$$||\cof (\eta) - \cof (\tau) ||_{\infty} \leq (n-1)^2 (||\eta||_{\infty} + ||\tau||_{\infty})^{n-2}||\eta-\tau||_{\infty}.$$
\end{lemma}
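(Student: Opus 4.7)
The plan is to treat $n=2$ and $n=3$ separately, exploiting that $\cof$ is linear in the first case and a degree-two polynomial map in the second.

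For $n=2$ the entries of $\cof A$ are, up to sign, a permutation of the entries of $A$, so $\cof$ is a linear map on matrices and $||\cof \eta - \cof \tau||_{\infty} = ||\cof(\eta - \tau)||_{\infty} = ||\eta - \tau||_{\infty}$. Since $(n-1)^2 = 1$ and $(||\eta||_\infty + ||\tau||_\infty)^{n-2} = 1$, this matches the claim.

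For $n=3$, each entry of $\cof A$ at position $(i,j)$ is a signed $2\times 2$ minor of $A$, hence of the form $\pm(A_{k\ell}A_{mn} - A_{kn}A_{m\ell})$ for appropriate indices. I would use the elementary identity
$$ab - cd = a(b-d) + (a-c)d$$
on each product separately to obtain, for any entries,
$$|\eta_{k\ell}\eta_{mn} - \tau_{k\ell}\tau_{mn}| \leq ||\eta||_\infty\, ||\eta - \tau||_\infty + ||\eta - \tau||_\infty\, ||\tau||_\infty \leq (||\eta||_\infty + ||\tau||_\infty)\,||\eta - \tau||_\infty.$$
Each cofactor entry is a difference of two such products, so a triangle inequality produces a further factor of $2$. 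Taking the maximum over $(i,j)$ gives
$$||\cof \eta - \cof \tau||_\infty \leq 2(||\eta||_\infty + ||\tau||_\infty)\,||\eta - \tau||_\infty,$$
which is stronger than the asserted bound (where $(n-1)^2 = 4$ is not optimal).

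A conceptually cleaner alternative, which also handles both cases uniformly, is to invoke the Mean Value Theorem for Banach spaces recalled just before the lemma: writing
$$\cof(\eta) - \cof(\tau) = \int_0^1 \cof'\bigl((1-t)\tau + t\eta\bigr)(\eta - \tau)\ud t,$$
and noting that $\cof$ is polynomial of degree $n-1$ in the entries, so $\cof'(A)(B)$ is (at most) linear in $A$ and linear in $B$, one reads off the factor $(||\eta||_\infty + ||\tau||_\infty)^{n-2}$ from $(1-t)\tau + t\eta$. There is no real obstacle here; the only point requiring care is the algebraic bookkeeping to identify the constant $(n-1)^2$, and since the stated bound is generous this amounts to a direct verification.
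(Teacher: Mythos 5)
Your argument is correct, and for $n=2$ it coincides with the paper's (linearity of $\cof$ in dimension two). For $n=3$ the paper proceeds differently in form though not in substance: it applies the Mean Value Theorem to each entry of $\cof(\eta)-\cof(\tau)$, viewed as a difference of $2\times 2$ determinants, writing it as $\cof\bigl(t\,\eta' + (1-t)\tau'\bigr):(\eta'-\tau')$ for some $t\in[0,1]$, where $\eta',\tau'$ are the corresponding $2\times 2$ blocks; bounding the four terms of this Frobenius product gives exactly the constant $(n-1)^2=4$. Your telescoping identity $ab-cd=a(b-d)+(a-c)d$ is an elementary substitute for that mean value step, applied product by product rather than minor by minor, and it even yields the sharper constant $2(\|\eta\|_\infty+\|\tau\|_\infty)\|\eta-\tau\|_\infty$, which of course implies the stated bound. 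Your sketched alternative via the Banach-space Mean Value Theorem applied to $\cof$ itself is essentially the paper's argument organized globally instead of entrywise (the derivative $\cof'(A)(B)$ of the quadratic map $\cof$ has entries like $A_{k\ell}B_{mn}+B_{k\ell}A_{mn}-A_{kn}B_{m\ell}-B_{kn}A_{m\ell}$, bounded by $4\|A\|_\infty\|B\|_\infty$, and integrating over the segment recovers $(n-1)^2(\|\eta\|_\infty+\|\tau\|_\infty)\|\eta-\tau\|_\infty$). In short: your primary route is more elementary and slightly sharper; the paper's route is the one that generalizes mechanically to the other mean-value estimates (Lemma \ref{ineqlem}) used later, since both rest on the same interpolation of matrices along a segment.
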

\begin{proof}
For $n=2$, we have $\cof (\eta) - \cof (\tau)=  \cof (\eta-\tau)$ from which the result follows. For $n=3$ we use the Mean Value Theorem. It is enough to estimate the first entry of 
$\cof (\eta) - \cof (\tau)$ which is equal to
\begin{align*}
\det \begin{pmatrix} \eta_{22} & \eta_{23} \\ \eta_{32} & \eta_{33} \end{pmatrix} - \det \begin{pmatrix} \tau_{22} & \tau_{23} \\ \tau_{32} & \tau_{33} \end{pmatrix} & =
\cof \bigg( t \begin{pmatrix} \eta_{22} & \eta_{23} \\ \eta_{32} & \eta_{33} \end{pmatrix}  +(1-t) \begin{pmatrix} \tau_{22} & \tau_{23} \\ \tau_{32} & \tau_{33} \end{pmatrix} \bigg): \\
& \begin{pmatrix} \eta_{22} - \tau_{22} &\eta_{23} - \tau_{23} \\\eta_{32} - \tau_{32} & \eta_{33} - \tau_{33} \end{pmatrix}, 
\end{align*}
for some $t \in [0,1]$. The result then follows.
\end{proof}

\begin{lemma} \label{ineqlem}
%Assume $n=2$ and $\Omega$ bounded. 
Let $v, w \in W^{2,n}(\Omega), n=2,3$ and $\psi \in H_0^1(\Omega) \cap H^2(\Omega)$, then %\cap W^2_{\infty}(\Omega)
\begin{align*}
\begin{split}
\int_{\Omega} (\det D^2 v - \det D^2 w) \psi \ud x & = 
 -\int_0^1 \bigg\{ \int_{\Omega}
\big((\cof [ (1-t) D^2 w + t D^2 v)]  \\
& \qquad  \qquad \qquad (D v - D w) \big) \cdot D \psi \ud x \bigg\} \ud t, 
\end{split}
\end{align*}
and
%\begin{align}
%\begin{split} \label{meanv1}
%\bigg| \int_{\Omega} (\det D^2 v - \det D^2 w) \psi \ud x \bigg| & \leq C (||v||_{2,n}+||w||_{2,n})^{n-1} ||v-w||_2  ||\psi ||_2. %\label{ineq1}
%\end{split}
%\end{align}
if in addition $v,w \in  W^{2,n}(\Omega) \cap W^{2,\infty}(\Omega)$
\begin{align}
\begin{split} \label{meanv1}
\bigg| \int_{\Omega} (\det D^2 v - \det D^2 w) \psi \ud x \bigg| & \leq n (|v|_{2,\infty}+|w|_{2,\infty})^{n-1} |v-w|_1  |\psi |_1,
\end{split}
\end{align}
and
\begin{align}
\begin{split} \label{meanv2}
\bigg| \int_{\Omega} [( \cof D^2 v  - \cof D^2 w) & D (v-w)] \cdot D \psi   \ud x \bigg|   \leq n (n-1)^2 (|v|_{2,\infty}  \\
& \quad \quad \quad +|w|_{2,\infty})^{n-2}   |v-w|_{2,\infty} |v-w|_1  |\psi |_1.
\end{split}
\end{align}
\end{lemma}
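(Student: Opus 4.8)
The plan is to derive the first (integral) identity from the Mean Value Theorem for Banach spaces recalled above, applied to the scalar functional $\Phi(v) := \int_\Omega (\det D^2 v)\,\psi\ud x$, and then to read off \eqref{meanv1} and \eqref{meanv2} from that identity by H\"older's inequality together with the elementary cofactor bounds (in particular Lemma \ref{cof-bound}).

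First I would fix $\psi \in H_0^1(\Omega)\cap H^2(\Omega)$. Using the divergence form \eqref{detform} of Lemma \ref{det-lem} and integrating by parts --- the boundary term vanishing because $\psi = 0$ on $\partial\Omega$ --- one has
$$
\Phi(v) = -\frac1n \int_\Omega (\cof D^2 v)\, Dv \cdot D\psi \ud x ,
$$
which by \eqref{detineq} is well defined and, being a continuous polynomial of degree $n$ in the entries of $Dv$ and $D^2 v$, of class $C^1$ on $W^{2,d}(\Omega)$ (indeed on $W^{2,n}(\Omega)$) for $n = 2, 3$. From the formula for $F'$ in Lemma \ref{det-lem} and one further integration by parts, its Fr\'echet derivative is, for $\zeta, z \in W^{2,d}(\Omega)$,
$$
\Phi'(\zeta)(z) = \int_\Omega F'(\zeta)(z)\, \psi \ud x = \int_\Omega \div\big((\cof D^2\zeta)\, Dz\big)\, \psi \ud x = -\int_\Omega \big((\cof D^2\zeta)\, Dz\big) \cdot D\psi \ud x .
$$
Applying the Mean Value Theorem with $a = w$ and $x = v$, and using $D^2[(1-t)w + tv] = (1-t) D^2 w + t D^2 v$ together with $D(v-w) = Dv - Dw$, gives $\Phi(v) - \Phi(w) = \int_0^1 \Phi'[(1-t)w + tv](v-w)\ud t$, which is exactly the asserted identity.

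To obtain \eqref{meanv1} I would estimate the inner integral in that identity for each fixed $t \in [0,1]$. Writing $A_t := (1-t) D^2 w + t D^2 v$, H\"older's inequality with exponents $(\infty, 2, 2)$ bounds it by $C\, ||\cof A_t||_{0,\infty}\, |v-w|_1\, |\psi|_1$, and since for $n = 2$ the entries of $\cof A_t$ are, up to sign, those of $A_t$ while for $n = 3$ they are $2\times 2$ minors of $A_t$, one has $||\cof A_t||_{0,\infty} \le C\,(|v|_{2,\infty} + |w|_{2,\infty})^{n-1}$ uniformly in $t$ (using $||A_t||_{0,\infty} \le |v|_{2,\infty} + |w|_{2,\infty}$). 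Integrating in $t$ gives \eqref{meanv1}. For \eqref{meanv2} I would apply Lemma \ref{cof-bound} pointwise a.e.\ in $\Omega$ with $\eta = D^2 v(x)$ and $\tau = D^2 w(x)$ and pass to essential suprema, getting $||\cof D^2 v - \cof D^2 w||_{0,\infty} \le (n-1)^2 (|v|_{2,\infty} + |w|_{2,\infty})^{n-2}\, |v-w|_{2,\infty}$, and then run the same H\"older estimate (exponents $(\infty, 2, 2)$, the numerical factor coming from the matrix--vector--vector contraction) on $\int_\Omega [(\cof D^2 v - \cof D^2 w) D(v-w)] \cdot D\psi\ud x$; a careful count of the constants yields the stated $n$ and $n(n-1)^2$.

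The one genuinely delicate point is the justification, at the regularity $v, w \in W^{2,d}(\Omega)$ and $\psi \in H^2(\Omega) \cap H_0^1(\Omega)$, that $\Phi$ is of class $C^1$ and that the integration by parts identity $\int_\Omega \big((\cof D^2\zeta):D^2 z\big)\psi\ud x = -\int_\Omega \big((\cof D^2\zeta)\,Dz\big)\cdot D\psi\ud x$ holds; this is handled exactly as in the discussion of the well-posedness of \eqref{var1} preceding \eqref{var1h}, namely through the bound \eqref{detineq} and a density argument approximating $v, w, z, \psi$ by smooth functions, for which $\div\cof D^2\zeta = 0$ and the integrations by parts are classical. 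Everything else is routine bookkeeping with H\"older's inequality.
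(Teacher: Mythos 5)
Your proposal is correct and follows essentially the same route as the paper: the Mean Value Theorem for the determinant, the H\"older/Cauchy--Schwarz bound on the matrix--vector--vector contraction, Lemma \ref{cof-bound} for \eqref{meanv2}, and a density argument to justify the identity at the stated regularity. The only (harmless) difference is that you apply the Mean Value Theorem to the scalar functional $\Phi(v)=\int_\Omega (\det D^2 v)\,\psi \ud x$ after passing to the divergence form, whereas the paper applies it to $v\mapsto \det D^2 v$ on smooth functions and then integrates against $\psi$, uses Fubini and integrates by parts --- the same computation performed in a different order.
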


\begin{proof}
We first note that for a matrix field $A$ and vector fields $b, c$, we have
$%\begin{align*}
(A b) \cdot c  = \sum_{i=1}^n (A b)_i c_i = \sum_{i,j=1}^n A_{ij} b_j c_i.
$ %\end{align*}
Thus by Cauchy-Schwarz inequality,
\begin{align*}
\int_{\Omega} (A b) \cdot c   & \leq ||A||_{\infty}  \sum_{i,j=1}^n \int_{\Omega} |b_j c_i| \leq ||A||_{\infty}  \sum_{i,j=1}^n ||b_j ||_0 || c_i||_0 \\
& =  ||A||_{\infty} \bigg(  \sum_{i=1}^n  || c_i||_0 \bigg) \bigg( 
\sum_{j=1}^n ||b_j ||_0
\bigg) \\
&  \leq n ||A||_{\infty}  \bigg( \sum_{i=1}^n  || c_i||_0^2 \bigg)^{\frac{1}{2}}
\bigg( \sum_{j=1}^n ||b_j ||_0^2 \bigg)^{\frac{1}{2}}.
\end{align*}
It follows that for 
$L^{\infty}$  valued matrix fields $A, B$ and $v, w \in W^{2,n}(\Omega), n=2,3$, by Cauchy-Schwarz inequality
\begin{align} \label{detineq01}
\bigg| \int_{\Omega} [(\cof A - \cof B) D v] D w \ud x
\bigg| \leq n ||\cof A - \cof B||_{\infty}^{} |v |_1  |w |_1.
\end{align}
%Using H$\ddot{\text{o}}$lder's inequality and Sobolev embedding as in the proof of \eqref{detineq}, we get
%\begin{align} \label{detineq02}
%\bigg| \int_{\Omega} [(\cof A) D v] D w \ud x
%\bigg| \leq C ||A||_{0,n}^{n-1} ||v ||_2  ||w ||_2.
%\end{align}
Next, let $F: C^{\infty}(\Omega) \to C^{\infty}(\Omega)$ denote the mapping $v \mapsto \det D^2 v$. Then $F$ is differentiable with
$$
F'[u](v) = (\cof D^2 u) :D^2 v = \div\big((\cof D^2 u) D v \big).
$$
Since $v \mapsto F'[v] $ is linear, $F$ is of class $C^1$ and by the Mean Value Theorem
$$
F(v) - F(w) = \int_0^1 \div\big((\cof(1-t) D^2 w + t D^2 v) (D v - D w) \big) \ud t.
$$
It follows that for $\psi \in \mathcal{D}(\Omega)$, and $v, w \in C^{\infty}(\Omega) \cap W^{2,\infty}(\Omega)$,
$$
\int_{\Omega} (\det D^2 v - \det D^2 w) \psi \ud x = \int_{\Omega}  \bigg\{ \int_0^1 \div\big((\cof(1-t) D^2 w + t D^2 v) (D v - D w) \big) \ud t \bigg\} \psi \ud x.
$$
By Fubini's theorem,
\begin{align*}
\begin{split}
\int_{\Omega} (\det D^2 v - \det D^2 w) \psi \ud x & = \int_0^1 \bigg\{ \int_{\Omega}
 \div\big((\cof(1-t) D^2 w + t D^2 v) (D v - D w) \big) \psi \ud x \bigg\} \ud t \\
=&   -\int_0^1 \bigg\{ \int_{\Omega}
[ (\cof(1-t) D^2 w + t D^2 v) (D v - D w) ] \cdot D \psi \ud x \bigg\} \ud t. %\label{mvt}
\end{split}
\end{align*}
Applying  \eqref{detineq01}, we obtain
\begin{align*}
\begin{split}
\bigg|\int_{\Omega} (\det D^2 v - \det D^2 w) \psi \ud x \bigg| & \leq n \int_0^1 ||\cof (1-t) D^2 w + t D^2 v||_{\infty}  |v-w|_1  |\psi |_1 \ud t \\
& \leq n  \int_0^1 ||(1-t) D^2 w + t D^2 v||_{\infty}^{n-1}  |v-w|_1  |\psi |_1 \ud t \\
& \leq n (|v|_{2,\infty}+|w|_{2,\infty})^{n-1} |v-w|_1  |\psi |_1.
\end{split}
\end{align*}
We have therefore obtained \eqref{meanv1} for $v, w \in C^{\infty}(\Omega) \cap W^{2,\infty}(\Omega)$ and $\psi \in \mathcal{D}(\Omega)$. We recall that $\mathcal{D}(\Omega)$ is dense in
$H_0^1(\Omega) \cap H^2(\Omega)$ and $C^{\infty}(\Omega) \cap W^{2,\infty}(\Omega)$ is dense in $W^{2,\infty}(\Omega)$. We then obtain \eqref{meanv1} by a density argument.

Inequality \eqref{meanv2} is a direct consequence of \eqref{detineq01} and  Lemma \ref{cof-bound}. 
\end{proof}

Let $\lambda_1(A)$ and $\lambda_n(A)$ denote the smallest and largest eigenvalues of a symmetric matrix $A$. Since $\det D^2 u \geq f \geq c_0 >0$ and $u$ is smooth and convex, there exist constants $m', M' >0$, independent of $h$
\begin{equation} \label{psc}
m' \leq \lambda_1(D^2 u(x)) \leq \lambda_n(D^2 u(x))  \leq M', \forall x \in \Omega.
\end{equation}
It follows from \cite{Hoffman53} Theorem 1 and Remark 2 p. 39 that for two symmetric $n \times n$ matrices $A$ and $B$, 
\begin{equation} \label{cont-eig}
|\lambda_k(A) - \lambda_k(B)| \leq n \max_{i,j} |A_{ij} - B_{ij}|, k=1, \ldots, n.
\end{equation}
It follows that for $u, v \in W^{2,\infty}(\Omega)$, 
\begin{align}
|\lambda_1( D^2 u(x)) - \lambda_1( D^2 v(x))| & \leq n |u-v|_{2,\infty} \label{lambda1}\\
|\lambda_n( D^2 u(x)) - \lambda_n( D^2 v(x))| & \leq n |u-v|_{2,\infty} \label{lambdan}.
\end{align}

By \eqref{inverse} we have for $v \in V^h$
$$
|v|_{2,\infty} \leq C_0 h^{-1-\frac{n}{2}} ||v||_1.
$$
Let $\delta >0$ such that
\begin{equation} \label{delta}
%\delta < \min \bigg\{ \, 1, \frac{m'}{2 n C_{}}, \frac{4 m}{ n (n-1)^2 C_{} C_1^2 (\frac{C_{}}{2} +2(1+C_{})||u||_{2,\infty})^{n-2}} \, \bigg\}, 
\delta < \min \bigg\{ \, 1, \frac{m'}{2 n C_{0}} \, \bigg\}. 
\end{equation}
%with $m=(m')^{n}/M'$ and where 
\begin{lemma} \label{lem0}
For $h$ sufficiently small and for all $v_h \in V^h$ with $||v_h- Q_h u||_1 < \delta h^{1+n/2}/2$, $D^2 (v_h|_K)$ is positive definite with 
$$\frac{m'}{2} \leq \lambda_1D^2 (v_h|_K) \leq \lambda_nD^2 (v_h|_K) \leq \frac{3 M'}{2},$$ 
where $m'$ and $M'$ are the constants of Assumption \eqref{psc}. It follows that $v_h$ is convex. %Moreover $\cof D^2 (v_h|_K)$ is invertible on each element $K$. % and the left hand side of \eqref{m1h} is well defined.
\end{lemma}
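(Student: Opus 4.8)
The plan is to show that, under the stated bound, $v_h$ is on each simplex $K$ a uniformly $W^{2,\infty}$-small perturbation of $u$, and then to transport the eigenvalue bounds \eqref{psc} for $D^2u$ to $D^2(v_h|_K)$ via the Weyl-type estimates \eqref{lambda1}--\eqref{lambdan}. The starting point is the splitting $v_h - u = (v_h - Q_h u) + (Q_h u - u)$, and the goal is to bound $|v_h - u|_{2,\infty}$ on each $K$ by $m'/(2n)$ once $h$ is small.

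First I would control the approximation term by \eqref{schum} with $k=2$, $p=\infty$: since $u\in W^{l+1,\infty}(\Omega)$ and $l\ge 3$,
\[
|Q_h u - u|_{2,\infty} \le C_{ap}\,h^{\,l-1}\,|u|_{l+1,\infty},
\]
which tends to $0$ as $h\to 0$; in particular it is $< m'/(4n)$ once $h$ is small enough. Next, since $v_h - Q_h u\in V^h$, the inverse inequality \eqref{inverse} with $s=2$, $p=\infty$, $l=1$, $q=2$ (so that $\min(0,n/p-n/q)=-n/2$) gives $|v_h-Q_h u|_{2,\infty}\le \|v_h-Q_h u\|_{2,\infty}\le C_{inv}h^{-1-n/2}\|v_h-Q_h u\|_1$; since the hypothesis bounds $\|v_h-Q_h u\|_1$ by $\delta h^{1+n/2}/2$, the $h$-powers cancel and
\[
|v_h - Q_h u|_{2,\infty} < \frac{C_{inv}\,\delta}{2} < \frac{m'}{4n},
\]
the last inequality being the choice of $\delta$ in \eqref{delta}. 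Adding the two bounds, $n\,|v_h - u|_{2,\infty} < m'/2$ on every $K$ for $h$ sufficiently small. (All $W^{2,\infty}$-quantities involving $v_h$ are understood simplex by simplex, which is what \eqref{inverse} delivers and what \eqref{lambda1}--\eqref{lambdan} require, since $u|_K$ and the polynomial $v_h|_K$ are genuinely in $W^{2,\infty}(K)$.)

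With this in hand, fix $K$ and $x\in K$; combining \eqref{psc} with \eqref{lambda1} and \eqref{lambdan} applied on $K$ yields
\[
\lambda_1\!\big(D^2(v_h|_K)(x)\big)\ge m' - n|v_h - u|_{2,\infty} \ge \frac{m'}{2}, \qquad \lambda_n\!\big(D^2(v_h|_K)(x)\big)\le M' + n|v_h - u|_{2,\infty} \le \frac{3M'}{2},
\]
the last inequality using $m'\le M'$. In particular $D^2(v_h|_K)$ is positive definite, which is the assertion. Finally, to see that $v_h$ is convex on the convex domain $\Omega$: since $v_h\in C^1$, it suffices to check that $v_h$ is convex along every line segment contained in $\Omega$; along such a segment the directional derivative of $v_h$ is continuous and, on each portion contained in a single simplex, nondecreasing (the Hessian there being positive semidefinite), hence nondecreasing along the whole segment.

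Everything here is routine; the only genuinely load-bearing step is matching the exponents in the inverse estimate \eqref{inverse} to the $h^{1+n/2}$ scaling of the hypothesis — this is exactly why the threshold in Lemma \ref{lem0} is $\delta h^{1+n/2}/2$ — together with the bookkeeping of broken $W^{2,\infty}$-seminorms on $V^h$, which is where quasi-uniformity of $\mathcal{T}$ enters through \eqref{inverse}.
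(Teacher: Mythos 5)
Your proposal is correct and follows essentially the same route as the paper: split $v_h-u$ through $Q_h u$, control $|Q_h u-u|_{2,\infty}$ by the approximation property \eqref{schum}, control $|v_h-Q_h u|_{2,\infty}$ by the inverse estimate \eqref{inverse} against the $h^{1+n/2}$ scaling of the hypothesis and the choice of $\delta$ in \eqref{delta}, and then transfer the eigenvalue bounds \eqref{psc} via \eqref{lambda1}--\eqref{lambdan}. The only difference is cosmetic: for the final step the paper simply cites the result that a piecewise convex $C^1$ function is convex (Lai, Lemma 1), whereas you reprove it with the line-segment/monotone-directional-derivative argument, which is fine.
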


\begin{proof} %[Proof of Lemma~\ref{lem0}]
For $v \in W^{2,\infty}(\Omega)$, $|v-u|_{2,\infty} \leq \delta C_{0}$ and \eqref{delta}
imply
\begin{align*}
|\lambda_1( D^2 v(x)) - \lambda_1( D^2 u(x))| & \leq n |v-u|_{2,\infty} \leq n \delta C_{0} \leq \frac{m'}{2} \, \text{ a.e. in} \,  \Omega, 
\end{align*}
since $ \delta < m'/(2 n C_0$.
By Assumption \eqref{psc} $\lambda_1(  D^2 u(x)) \geq  m',$ and thus
$\lambda_1(D^2 v(x)) \geq \lambda_1(D^2 u(x)) - m'/2 \geq m'/2$ a.e. in   $ \Omega$. We conclude that for $|v-u|_{2,\infty} \leq \delta C_{0}$, $\lambda_1( D^2 v(x)) > m'/2, $ a.e. in   $ \Omega$. 

Now, by \eqref{schum}, $|u-Q_h u|_{2,\infty} \leq C_{} h^{d-1} |u|_{d+1,\infty}$. So for $h$ sufficiently small,  $|u-Q_h u|_{2,\infty} \leq \delta C_{0}/2$. Moreover by \eqref{inverse} and the assumption of the lemma
\begin{align*}
|v_h-Q_h u|_{2,\infty} \leq C_{0} h^{-1-\frac{n}{2}} ||v_h-Q_h u||_1 \leq \frac{\delta C_{0}}{2}.
\end{align*}
Therefore $|v_h-u|_{2,\infty} \leq \delta C_{0}$ as well and it follows that $\lambda_1( D^2 v_h(x)) > m'/2, $ a.e. in   $ \Omega$ as claimed. 

Since $m' \leq M'$, we also have  $|\lambda_n( D^2 v_h(x)) - \lambda_n( D^2 u(x))| \leq M'/2$ a.e. in   $ \Omega$. Thus
$\lambda_n( D^2 v_h(x)) \leq \lambda_n( D^2 u(x)) +M'/2 \leq 3 M'/2$. 

Since $v_h$ is piecewise convex and $C^1$, $v_h$ is convex \cite{Lai2000} Lemma 1. This concludes the proof.
\end{proof}
Put
$$X^h= \{ \, v_h \in V^h, v_h=g_h \, \text{on} \, \partial \Omega, ||v_h-Q_h u||_1 < \frac{\delta h^{1+\frac{n}{2}}}{4} \, \}.$$
By Lemma \ref{lem0}, for $h$ sufficiently small and $v_h \in X^h, ||v_h- Q_h u||_1 < \delta h^{1+n/2}/2$ and hence $v_h$ is  convex with smallest eigenvalue bounded a.e. below by $m'/2$ and above by $3M'/2$. 

As a consequence of Assumption \eqref{psc} we have
\begin{lemma} \label{lem-1}
%There exists constants $m, M >0$ independent of $h$ such that 
For $h$ sufficiently small and all $v_h \in X_h$
$$
m \leq \lambda_1(\cof D^2 v_h(x)) \leq \lambda_n(\cof D^2 v_h(x))  \leq M, \forall x \in K, K \in \mathcal{T}_h,
$$ 
with $m=(m')^{n}/M'$ and $M=(M')^{n}/m$.

It follows that for $ w \in H^1(K)$
\begin{equation} \label{p-d-K}
m |w|_{1,K}^2 \leq \int_{K} [(\cof\, D^2 v_h(x)) D w(x)] \cdot D w(x) \, \ud x \leq  M |w|_{1,K}^2. %\label{conconv}
\end{equation}
\end{lemma}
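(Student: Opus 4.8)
The plan is to obtain the spectral bounds for $\cof D^2 v_h$ directly from the spectral bounds for $D^2 v_h$ furnished by Lemma \ref{lem0}, through the elementary identity relating a symmetric positive definite matrix to its matrix of cofactors, and then to read \eqref{p-d-K} off a pointwise Rayleigh quotient estimate. First, since any $v_h \in X^h$ satisfies $||v_h - Q_h u||_1 < \delta h^{1+n/2}/2$, Lemma \ref{lem0} applies: for $h$ small, on each simplex $K$ and at each point $x \in K$, $D^2(v_h|_K)(x)$ is symmetric positive definite with both $\lambda_1(D^2 v_h(x))$ and $\lambda_n(D^2 v_h(x))$ in a fixed compact subinterval of $(0,\infty)$, independent of $h$, $x$, $K$ and $v_h \in X^h$; consequently $\det D^2 v_h(x) = \prod_{i=1}^n \lambda_i(D^2 v_h(x))$ is bounded above and below by positive constants with the same uniformity.

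Next I would pass to cofactors pointwise. Since $A := D^2 v_h(x)$ is symmetric and invertible, $\cof A = (\det A)\, A^{-1}$ is again symmetric positive definite, with eigenvalues exactly $\det A / \lambda_i(A) = \prod_{j \neq i} \lambda_j(A)$; in particular $\lambda_1(\cof A) = \det A / \lambda_n(A)$ and $\lambda_n(\cof A) = \det A / \lambda_1(A)$. Substituting the two-sided bounds of the previous paragraph for $\det A$, $\lambda_1(A)$ and $\lambda_n(A)$ produces positive constants, independent of $h$ and of $v_h \in X^h$ --- these are the $m$ and $M$ of the statement --- such that $m \le \lambda_1(\cof D^2 v_h(x)) \le \lambda_n(\cof D^2 v_h(x)) \le M$ for every $x \in K$ and every $K \in \mathcal{T}_h$; in particular $\cof D^2 v_h(x)$ is itself symmetric positive definite. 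This is the first assertion.

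The inequality \eqref{p-d-K} is then immediate from the pointwise bound $\lambda_1(B)\,|\xi|^2 \le B\xi \cdot \xi \le \lambda_n(B)\,|\xi|^2$, valid for any symmetric positive definite matrix $B$ and any vector $\xi$: applying it with $B = \cof D^2 v_h(x)$ and $\xi = Dw(x)$ for $w \in H^1(K)$, which is legitimate for almost every $x \in K$, and integrating over $K$ gives $m \int_K |Dw|^2 \,\ud x \le \int_K [(\cof D^2 v_h)\,Dw] \cdot Dw \,\ud x \le M \int_K |Dw|^2 \,\ud x$, which is \eqref{p-d-K} since $\int_K |Dw|^2 \,\ud x = |w|_{1,K}^2$.

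The step I expect to require the most care is the bookkeeping of constants. Lemma \ref{lem0} only confines the spectrum of $D^2 v_h$ to $[m'/2, 3M'/2]$, not arbitrarily close to $[m', M']$, so recovering precisely the constants $m = (m')^n/M'$ and $M = (M')^n/m$ named in the statement requires a small refinement: take $h$ still smaller and use \eqref{cont-eig} together with Lemma \ref{cof-bound} and the estimate $|v_h - u|_{2,\infty} \le \delta C_{inv}$ from the proof of Lemma \ref{lem0}, so that the eigenvalues of $D^2 v_h(x)$ lie arbitrarily near those of $D^2 u(x)$, which lie in $[m', M']$, and $\det D^2 v_h(x)$ lies arbitrarily near $\det D^2 u(x) = f(x) \ge (m')^n$. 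In any case only the existence of $h$-independent positive constants $m, M$ of these orders is used in the sequel.
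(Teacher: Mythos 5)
Your argument is essentially the paper's own proof: both pass from the spectral bounds on $D^2 v_h$ in Lemma \ref{lem0} to the cofactor matrix via $\cof A = (\det A)(A^{-1})^T$, so that the eigenvalues of $\cof D^2 v_h(x)$ are $\det D^2 v_h(x)/\lambda_i(D^2 v_h(x))$, and then obtain \eqref{p-d-K} from the pointwise Rayleigh quotient bound integrated over $K$. Your closing remark about the constants is well taken --- the paper itself quotes $m=(m')^n/M'$, $M=(M')^n/m$ while Lemma \ref{lem0} only confines the spectrum to $[m'/2,3M'/2]$ --- but, as you say, only the existence of $h$-independent bounds matters in the sequel.
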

\begin{proof} %[Proof of Lemma~\ref{lem-1}] 
We first note that by Lemma \ref{lem0}, there exist constants $m, M >0$ such that $
m \leq \lambda_1(\cof D^2 v_h(x)) \leq \lambda_n(\cof D^2 v_h(x))  \leq M $ a.e. in $\Omega$ for $v_h \in X_h$. 
To prove this, recall that for an invertible  matrix $A$,
$\cof A = (\det A) (A^{-1})^T$. Since a matrix and its transpose have the same set of eigenvalues, the eigenvalues of $\cof A$ are of the form $ \det A/\lambda_i$ where 
$\lambda_i, i=1,\ldots,n$ is an eigenvalue of $A$. Applying this observation to $A = D^2 u(x)$ and using Lemma \ref{lem0}, we obtain that the eigenvalues of
$\cof D^2 v_h(x)$ are a.e. uniformly bounded below by $m=(m')^{n}/M'$ and above by $M=(M')^{n}/m$.

Since $\lambda_1(D^2 v_h(x))$ and $\lambda_n(D^2 v_h(x))$ are the minimum
and maximum respectively of the Rayleigh quotient $[(\cof\, D^2 v_h(x)) z] \cdot z/||z||^2$, where $||z||$ denotes the standard Euclidean norm in $\R^n$, we have
$$
m ||z||^2 \leq [(\cof\, D^2 v_h(x)) z] \cdot z \leq  M ||z||^2, z \in \R^n. 
$$
This implies
\begin{equation*}
m |w|_{1,K}^2 \leq \int_{K} [(\cof\, D^2 v_h(x)) D w(x)] \cdot D w(x) \, \ud x \leq  M |w|_{1,K}^2, w \in H^1(K). %\label{conconv}
\end{equation*}
%where we use the same notation $m'$ for $m' |\Omega|$ and $M'$ for $M' |\Omega|$ with $|\Omega|$ denoting the volume of $\Omega$.

\end{proof}

\begin{rem} \label{conv-disc-sol}
As a consequence of Lemma \ref{lem0} and Theorem \ref{errorest}, for $h$ sufficiently small, the solution $u_h$ of \eqref{var1h} is in $X^h$ and hence convex.
\end{rem}

Let $(V_0^h)'$ denote the dual space of
$V_0^h$ with $V_0^h$ equipped with $||.||_1$. We consider the mapping $F_h: V^h \to (V_0^h)'$  defined by 
$$\<F_h(v_h),\psi_h\> = \int_{\Omega} (\det D^2 v_h)  \psi_h \ud x, v_h \in V^h, \psi_h \in V_0^h,$$ and recall that $f > 0$ is continuous. Since $\Omega$
is bounded, by $L^2$ duality, $V^h \subset (V_0^h)'$. We use the notation $||.||$ for the operator norm of an element of a dual space. 
With this notation, \eqref{var1} can be written $\det D^2 u =f$ in $V_0'$ and
\eqref{var1h} can be written $F_h(u_h)=f$ in $(V_0^h)'$.
%We recall that, c.f. Theorem 4.8 of \cite{Awanou2009a} and its proof, the operator $F_h$ satisfies the following properties:

Let $C_1$ denote the constant in the Poincare's inequality, i.e. $||p||_1 \leq C_1 |p|_1, p \in H_0^1(\Omega)$. Without loss of generality, we may assume that $C_1 \leq 1$, for example by assuming that 
the domain $\Omega$ is contained in a cube of side length at most 1, \cite{Braess} p.30. 

Note that by Lemma \ref{det-lem} and integration by parts, 
\begin{equation} \label{Frechet-h} 
\<F_h'(v_h)(p), p\> = -\int_{\Omega}[(\cof D^2 v_h) D p] \cdot D p \ud x, p \in H_0^1(\Omega).
\end{equation}
Thus by Lemma \ref{lem-1} and $h$ sufficiently small %and \eqref{Frechet-h} %for some constants $m'', M'' >0$
\begin{equation}
-M ||p||_1^2  \leq \<F_h'(v_h)(p), p\> \leq -\frac{m}{C_1^2} ||p||_1^2, p \in H_0^1(\Omega), \label{fundpos}
\end{equation}
for $v_h \in X^h$. 
We have
\begin{lemma} \label{prop-Fh} The following properties hold for $h$ sufficiently small.

{\it Discrete coercivity}: 
\begin{equation*} %\label{coercive-d}
||F_h'(v_h)(p)|| \geq \frac{m}{C_1^2}   ||p||_1, \forall  p \in V_0^h \, \text{and} \, v_h \in X^h. % \, \text{for a constant} \, c >0.
\end{equation*}

{\it Generalized Lipschitz continuity}: 
\begin{equation} \label{glc}
||F_h'(v_h)(\psi) - F_h'(w_h)(\psi)|| \leq C_2 h^{-1-\frac{n}{2}}  ||v_h-w_h||_{1}^{} ||\psi||_{1},
\end{equation}
for $v_h,w_h \in X^h$ and $\psi \in V^h, \eta \in V_0^h$ and with $C_2= C || u||_{2,\infty}.$
\end{lemma}

\begin{proof}

By \eqref{fundpos} $||F_h'(v)(p)|| = \text{sup}_{\psi \neq 0} |\<F'_h(v)(p), \psi\>|/||\psi||_1 \geq m/C_1^2  ||p||_1$, which proves the discrete coercivity condition.

%Hence $F'_h(v_h)$ is invertible on $ H_0^1(\Omega)$. Since $u_{k+1}-u_k \in H_0^1(\Omega)$, the Newton's iterates are well defined.
For $v_h,w_h \in X^h$, $\psi \in V^h$, $\eta \in V_0^h$, we have
\begin{align} \label{F-h-temp}
\begin{split}
\<F_h'(v_h)(\psi),\eta\> & - \<F_h'(w_h)(\psi),\eta \>  = \int_{\Omega} (\div (\text{cof} \, D^2 v_h) D \psi) \eta \ud x \\ &\qquad \qquad \qquad \qquad \qquad  - 
\int_{\Omega} (\div (\text{cof} \, D^2 w_h) D \psi) \eta \ud x \\
& = -\int_{\Omega} [(\text{cof} \, D^2 v_h) D \psi] \cdot D \eta \ud x \\ &\qquad \qquad \qquad + \int_{\Omega} [(\text{cof} \, D^2 w_h) D \psi] \cdot D \eta \ud x \\
& = \int_{\Omega} [(\text{cof} \, D^2 w_h - \text{cof} \, D^2 v_h) D \psi] \cdot D \eta \ud x.
\end{split}
\end{align}
By \eqref{meanv2} and an inverse estimate
\begin{align*}
|\<F_h'(v_h)(\psi),\eta\> - \<F_h'(w_h)(\psi),\eta \>| & \leq n(n-1)^2 ( ||v_h||_{2,\infty} \\
&   \quad \quad  + ||w_h||_{2,\infty})^{n-2} ||v_h-w_h||_{2,\infty}^{} ||\psi||_{1} ||\eta||_1 \\
||F_h'(v_h)(\psi) - F_h'(w_h)(\psi)|| & \leq n(n-1)^2 ( ||v_h||_{2,\infty} \\
&   \qquad \quad + ||w_h||_{2,\infty})^{n-2} ||v_h-w_h||_{2,\infty}^{} ||\psi||_{1} \\
& \leq n(n-1)^2C_{0} ( ||v_h||_{2,\infty}+ ||w_h||_{2,\infty})^{n-2} \\
&  \qquad \qquad \qquad   ||v_h-w_h||_{1}^{} ||\psi||_{1}. 
\end{align*}
In the case $n=3$ we have by an inverse estimate, the definition of $X^h$ and the assumption on $\delta$ \eqref{delta}
\begin{align*}
 ||v_h||_{2,\infty}+ ||w_h||_{2,\infty} & \leq ||v_h-Q_h u||_{2,\infty}+ ||w_h-Q_h u||_{2,\infty} + 2 ||Q_h u||_{2,\infty} \\
 & \leq C_{0} h^{-1-\frac{3}{2}} (||v_h-Q_h u||_{1}+ ||w_h-Q_h u||_{1}) \\
 & \qquad \qquad \qquad \qquad + C || u||_{2,\infty} \\
 & \leq \frac{C_{0}  \delta}{2} + C || u||_{2,\infty}\\
 & \leq \frac{C_{0} }{2} + C || u||_{2,\infty}.
\end{align*}
We conclude that \eqref{glc} holds. 
\end{proof}
%By Theorem \ref{errorest}, for $h$ sufficiently small $u_h \in X^h$.
We define
$$Y^h= \{ \, v_h \in V^h, v_h=g_h \, \text{on} \, \partial \Omega, ||v_h-u_h ||_1 <  \frac{\delta h^{1+\frac{n}{2}}}{4}\, \}.$$

\begin{rem}\label{Yh-rem}
For $v_h \in Y^h$, $ ||v_h-Q_h u ||_1 \leq  ||v_h-u_h ||_1+ ||Q_h u -u_h ||_1 < \delta h^{1+n/2}/2$ and hence $Y^h \subset X^h$.%by Lemmas \ref{lem0} and \ref{lem-1}, elements of $Y^h$ satisfy \eqref{p-d-K}. 
\end{rem}
We will make the abuse of notation of denoting by both $u_k$ the solution of the iterative methods at both the continuous and discrete level. In the remainder of this paper, only discrete solutions are considered. This alleviates the notation.

Finally we note that we have the freedom to choose $\delta$ given by \eqref{delta} smaller. Indeed this will be necessary for the convergence of the pseudo transient continuation methods.

We make the assumption that \eqref{var1h} has a unique strictly convex solution. Recall from Remark \ref{conv-disc-sol} that this holds for example when \eqref{m1} has a smooth strictly convex solution and for $h$ sufficiently small.

\section{Convergence of the pseudo transient continuation methods } \label{continuation}
We now prove the convergence of the iterative methods \eqref{newtonseq}. %We first consider the case where $L$ is the identity operator and 
The discretization of \eqref{newtonseq} depends on the choice of $L$: Given $\nu >0$ and a suitable initial guess, find $u_{k+1} \in V^h,
 u_{k+1} = g_h \,\text{on} \, \partial \Omega$ such that we have for all $\psi_h \in V_0^h$, when $L$ is the Laplace operator 
 \begin{align} 
\begin{split}
-\nu  \int_{\Omega}  (D u_{k+1}- D u_k) \cdot D \psi_h \ud x  & + \<F_h'(u_k) (u_{k+1}-u_k), \psi_h \> \\
& \qquad \qquad \qquad  = \<-(F_h(u_k)-f), \psi_h \>, \label{nvar12}
\end{split}
\end{align}
and when $L$ is the negative of the identity,
\begin{align} 
\begin{split}
-\nu  \int_{\Omega}  (u_{k+1}-u_k) \psi_h \ud x  & + \<F_h'(u_k) (u_{k+1}-u_k), \psi_h \> \\
& \qquad \qquad = \<-(F_h(u_k)-f), \psi_h \>.  \label{nvar11} %\forall \psi_h \in V_0^h, 
\end{split}
\end{align}
We define
\begin{equation} \label{ch}
C_h = \frac{m}{C_1^2} + \frac{\nu h^2}{C_0^2},
\end{equation}
and require $h < \min \{\sqrt{2} C_0, C_0/C_1 \}.$
Thus since $h^2 < 2 C_0^2$ we have $2-h^2/C_0^2 > 0$ and if we require
\begin{equation} \label{nu-cond}
0< \nu < \frac{m}{C_1^2(2-\frac{h^2}{C_0^2})},
\end{equation}
we have
\begin{equation} \label{nu-cond2}
0< \nu < \frac{C_h}{2}.
\end{equation}

We now require that
\begin{equation} \label{delta3}
\delta < \min \bigg\{ \, 1, \frac{m'}{2 n C_{0}},\frac{2 C_h}{C_2}\, \bigg\}. 
%\delta < \min \bigg\{ \, 1, \frac{m'}{2 n C_{}} \, \bigg\}, 
\end{equation}

\begin{thm} \label{pseudo-cvg} %Under the assumptions of Lemma \ref{errorest}, a 
Let $\Omega$ be convex with a Lipschitz continuous boundary and assume  that the spaces $V^h=S^1_d(\mathcal{T})$ have the optimal approximation property \eqref{schum} and satisfy the inverse estimates \eqref{inverse}.
A sequence defined by either \eqref{nvar12} or \eqref{nvar11} with a suitable initial guess 
and for $\nu, h$ sufficiently small converges to the unique strictly convex solution of \eqref{var1h}. Moreover
the convergence rate is linear.
\end{thm}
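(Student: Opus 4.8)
The plan is to treat \eqref{nvar12} and \eqref{nvar11} uniformly as a fixed point iteration and to show that, for a suitable initial guess, the map $u_k \mapsto u_{k+1}$ is a contraction on the ball $Y^h$ around the solution $u_h$ of \eqref{var1h}, with $Y^h$ left invariant; the contraction estimate and the invariance have to be proved together by induction. First fix $\nu$ small enough that $m - 2\nu C_p^2 > 0$, so that $\alpha := \nu C_p^2/(m - \nu C_p^2) \in (0,1)$ and $1-\alpha = (m - 2\nu C_p^2)/(m-\nu C_p^2) > 0$; having fixed $\nu$, shrink $\delta$ so that \eqref{delta3} holds, and set $\rho := (1+\alpha)/2 < 1$. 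Take $h$ small enough that Lemmas \ref{lem0}--\ref{prop-Fh} apply, that \eqref{var1h} has its unique strictly convex solution $u_h$, which then lies in $X^h$ by Theorem \ref{errorest} and the remark following Lemma \ref{lem-1}, and that the initial guess satisfies $u_0 \in Y^h$; by a \emph{suitable} initial guess we mean precisely $u_0 \in Y^h$. Note that $u_h \in Y^h \subset X^h$ (Remark \ref{Yh-rem}), and that any convex combination $(1-t)u_h + t v_h$ with $v_h \in Y^h$, $t \in [0,1]$, again lies in $Y^h \subset X^h$.

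I would first record that for every $v_h \in X^h$ the operator $A := \nu L + F_h'(v_h) \colon V_0^h \to (V_0^h)'$ is invertible with $\|A^{-1}\| \le C_p^2/(m - \nu C_p^2)$. Indeed, for $p \in V_0^h$ one has $\langle \nu L(p), p\rangle \ge -\nu\|p\|_1^2$ for both $L = -I$ and $L = \Delta$, while $\langle F_h'(v_h)(p), p\rangle \ge (m/C_p^2)\|p\|_1^2$ by \eqref{fundpos}; hence $\langle A(p), p\rangle \ge \tfrac{m - \nu C_p^2}{C_p^2}\|p\|_1^2$, and since $V_0^h$ is finite dimensional this yields both the invertibility of $A$ and the bound on $\|A^{-1}\|$. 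In particular, if $u_k \in Y^h \subset X^h$, then $u_{k+1}$ is uniquely determined by \eqref{nvar12} or \eqref{nvar11}; since $u_{k+1}$ and $u_k$ carry the same prescribed boundary data, $u_{k+1}-u_k \in V_0^h$ and hence $e_{k+1} := u_{k+1}-u_h \in V_0^h$.

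Next I would derive the error recursion. Testing Lemma \ref{ineqlem} against $\psi_h \in V_0^h$ and using \eqref{Frechet-h} together with $F_h(u_h) = f$ gives $F_h(u_k) - f = \overline{F}_k(e_k)$ in $(V_0^h)'$, where $\overline{F}_k := \int_0^1 F_h'(u_h + t e_k)\ud t$. Substituting into \eqref{nvar12}/\eqref{nvar11} and using $u_{k+1}-u_k = e_{k+1}-e_k$ yields
\[
(\nu L + F_h'(u_k))(e_{k+1}) = \nu L(e_k) + \bigl(F_h'(u_k) - \overline{F}_k\bigr)(e_k).
\]
Here $\|\nu L(e_k)\| \le \nu\|e_k\|_1$ by Cauchy--Schwarz, and, assuming $u_k \in Y^h$ so that $u_h + t e_k = (1-t)u_h + t u_k \in Y^h \subset X^h$ for $t \in [0,1]$, the generalized Lipschitz estimate \eqref{glc} with $\|u_k - (u_h + t e_k)\|_1 = (1-t)\|e_k\|_1$ and $\int_0^1 (1-t)\ud t = \tfrac12$ gives $\|(F_h'(u_k) - \overline{F}_k)(e_k)\| \le \tfrac12 n(n-1)^2 h^{-1-n/2} C_{inv} C_l^{n-2}\|e_k\|_1^2$, with $C_l$ as in Lemma \ref{prop-Fh}. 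Applying the bound on $\|A^{-1}\|$,
\[
\|e_{k+1}\|_1 \le \frac{C_p^2}{m - \nu C_p^2}\Bigl(\nu\|e_k\|_1 + \tfrac12 n(n-1)^2 h^{-1-n/2} C_{inv} C_l^{n-2}\|e_k\|_1^2\Bigr) = \bigl(\alpha + \beta\|e_k\|_1\bigr)\|e_k\|_1,
\]
with $\beta := \dfrac{n(n-1)^2 C_{inv} C_p^2 C_l^{n-2}}{2(m - \nu C_p^2)}\, h^{-1-n/2}$.

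Finally I would close the induction. If $u_k \in Y^h$ then $\|e_k\|_1 < \delta h^{1+n/2}/4$, so, recalling $C_l^{n-2} = (\tfrac{C_{inv}}{2}+2(1+C_{ap})\|u\|_{2,\infty})^{n-2}$, the choice \eqref{delta3} gives $\beta\|e_k\|_1 < \beta\,\delta h^{1+n/2}/4 = \dfrac{n(n-1)^2 C_{inv} C_p^2 C_l^{n-2}\,\delta}{8(m-\nu C_p^2)} < \tfrac12(1-\alpha)$, whence $\alpha + \beta\|e_k\|_1 < \rho < 1$ and $\|e_{k+1}\|_1 \le \rho\|e_k\|_1 < \delta h^{1+n/2}/4$; together with the boundary condition and the well-posedness above, this shows $u_{k+1} \in Y^h$. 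Starting from $u_0 \in Y^h$, induction now gives $u_k \in Y^h$ and $\|u_k - u_h\|_1 \le \rho^k\|u_0 - u_h\|_1$ for all $k$, hence linear convergence of $u_k$ to $u_h$, in every norm since $V^h$ is finite dimensional; $u_h$ is convex by the remark after Lemma \ref{lem-1} and is the unique strictly convex solution of \eqref{var1h} by the standing assumption, while the same argument covers $L = \Delta$ and $L = -I$. I expect the main obstacle to be the constant bookkeeping in the last two paragraphs: extracting the linear factor $\alpha$ from the interplay of the coercivity constant $m - \nu C_p^2$ with the norm of $\nu L$, and checking that the quadratic term, which carries the blow-up factor $h^{-1-n/2}$ from \eqref{glc} but is fed increments of size $O(h^{1+n/2})$ by the definition of $Y^h$, is dominated exactly by the threshold in \eqref{delta3} — so that the map is a genuine contraction on $Y^h$ and $Y^h$ is invariant. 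This coupling of the contraction with the invariance of $Y^h$ is precisely what keeps the constant-coefficient bounds of Lemmas \ref{lem-1} and \ref{prop-Fh} applicable at every step.
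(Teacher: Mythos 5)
Your argument is correct, and its skeleton is the same as the paper's: coercivity of the perturbed linearization $\nu L+F_h'(v_h)$ on $X^h$ (the paper's bound \eqref{m-est2}), the Mean Value Theorem to write $F_h(u_k)-f$ as an averaged derivative, the generalized Lipschitz estimate \eqref{glc}, and the threshold \eqref{delta3} on $\delta$ to keep the iterates in $Y^h$. Where you genuinely diverge is in how convergence is extracted. Your error identity carries the $\nu$-term evaluated at $e_k$, giving $\|e_{k+1}\|_1\le(\alpha+\beta\|e_k\|_1)\|e_k\|_1$ with $\alpha+\beta\|e_k\|_1<\rho=(1+\alpha)/2<1$ on $Y^h$ (this is in fact what the paper's own derivation produces; the linear term written as $\|u_{k+1}-u_h\|_1$ in \eqref{cvg-psd} appears to be a slip), so you read off a one-step contraction and the linear rate directly, and "suitable initial guess" reduces to $u_0\in Y^h$. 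The paper instead only extracts the monotone decrease \eqref{cvg-p} from its recursion and then proves convergence through a separate residual-decay argument, $\|F_h(u_k)-f\|\le s^k q$, which needs the extra smallness hypothesis $c(h)\|F_h(u_0)-f\|<1-c_0$ on the initial guess and an additional coercivity step (integrating \eqref{fundpos}) to convert residual decay into $H^1$ decay. Your route is shorter, gives an explicit contraction factor, and removes the residual-based condition on $u_0$; the paper's residual argument, on the other hand, is the mechanism it reuses to discuss the $\nu=0$ (Newton) case. Your observation that the convex combinations $u_h+te_k$ stay in $Y^h\subset X^h$, and the factor $\tfrac12$ from $\int_0^1(1-t)\,dt$, are small but correct refinements of points the paper passes over silently.
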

\begin{proof}
%We give the complete proof for \eqref{nvar12} and outline the proof for \eqref{nvar11}. For this, 
Define $\mathcal{M}_i: V_0^h \to (V_0^h)', i=1,2$ 
for $v, \psi_h \in V_0^h$ by
\begin{align*} \label{m-est}
\< \mathcal{M}_1(v),\psi_h \> = \int_{\Omega}  D v \cdot D \psi_h \ud x, \, \< \mathcal{M}_2(v),\psi_h \> = \int_{\Omega}  v \psi_h \ud x. 
\end{align*}
We note that
\begin{align}
||\mathcal{M}_i(v)|| \leq ||v||_1, v \in V_0^h, i=1,2.
\end{align}
Next, for $p \in V_0^h$, by \eqref{fundpos} and Poincare's inequality we have%, we have by Lemma \ref{lem-1} and \eqref{Frechet-h} $F'_h(v_h)(p)(p)| \geq m |p|_1^2$ and so for $\nu < m$
$$
-(M + \nu) ||p||_1^2  \leq \<F_h'(v_h)(p), p\> - \nu |p|_1^2  \leq -\frac{1}{C_1^2}(m+\nu) ||p||_1^2.$$
Thus
\begin{equation}
|  \<F_h'(v_h)(p), p\> - \nu |p|_1^2 | =  -\<F_h'(v_h)(p), p\> + \nu |p|_1^2 \geq \frac{1}{C_1^2}(m+\nu)  ||p||_1^2.
\end{equation}
We have
\begin{align*}
||-\nu \mathcal{M}_1(p) + F'_h(v_h)(p)|| & = \text{sup}_{\psi_h \neq 0} \frac{|-\nu \mathcal{M}_1(p)(\psi_h) + F'_h(v_h)(p)(\psi_h)|}{||\psi_h||_1} \\
& \geq  \frac{|-\nu \mathcal{M}_1(p)(p) + F'_h(v_h)(p)(p)|}{||p||_1} \\
& =  \frac{|-\nu |p|_1^2 + F'_h(v_h)(p)(p)|}{||p||_1} \\
& \geq \frac{1}{C_1^2}(m+\nu)  ||p||_1.
\end{align*}
Similarly, since $C_0^{-2} h^2 ||p||_1^2 \leq ||p||_0^2 \leq ||p||_1^2$ 
$$
-(M + \nu) ||p||_1^2 \leq \<F_h'(v_h)(p), p\> - \nu ||p||_0^2  \leq - ( \frac{m}{C_1^2} + \frac{\nu h^2}{C_0^2}) ||p||_1^2.
$$
And again %with $0 < \nu < m/C_1^2$
\begin{align*}
||-\nu \mathcal{M}_2(p) + F'_h(v_h)(p)|| & \geq   \frac{|-\nu ||p||_0^2 + F'_h(v_h)(p)(p)|}{||p||_1}  \\
&\geq ( \frac{m}{C_1^2} + \frac{\nu h^2}{C_0^2}) ||p||_1.
\end{align*}
For $h$ sufficiently small, i.e. $h \leq C_0/C_1$, we have $1/C_1^2 \geq h^2/C_0^2$ and 
we therefore have
\begin{align} \label{m-est2}
||p||_1 \leq \frac{1}{ \frac{m}{C_1^2} + \frac{\nu h^2}{C_0^2} } ||-\nu \mathcal{M}_i(p) + F'_h(v_h)(p)||, p \in V_0^h, i=1,2. 
\end{align}

%For $v_h, w_h \in Y^h$ and  $p \in V_0^h$, by \eqref{fundpos} we have%, we have by Lemma \ref{lem-1} and \eqref{Frechet-h} $F'_h(v_h)(p)(p)| \geq m |p|_1^2$ and so for $\nu < m$
%$$
%(\frac{m-\nu}{C_1^2} -M) ||p||_1^2 \leq \<F_h'(v_h)(p), p\> - F_h'(w_h)(p), p\> - \nu |p|_1^2  \leq (M-\nu-\frac{m}{C_1^2}) ||p||_1^2.
%$$

We can now determine under which conditions when $u_k \in Y^h$ we have $u_{k+1} \in Y^h$ as well. Using \eqref{nvar12}, \eqref{nvar11}, $F_h(u_h)=f$ and the Mean Value Theorem,
\begin{align*}
-\nu \mathcal{M}_i(u_{k+1}-u_h)  +  F_h'(u_k)&  (u_{k+1}-u_h)  = -\nu \mathcal{M}_i(u_{k}-u_h) + F_h'(u_k) (u_k-u_{h}) \\
& \qquad \qquad \qquad - (F_h(u_k) -f)\\
& = -\nu \mathcal{M}_i(u_{k}-u_h) + F_h'(u_k) (u_k-u_{h}) \\
& \qquad \quad  - \int_0^1 F_h'(u_h + \theta (u_k - u_h))(u_k - u_h) \ud \theta\\
&  =\int_0^1 [F_h'(u_k) - F_h'(u_h + \theta (u_k - u_h))] 
(u_k - u_h)\\
& \qquad -\nu \mathcal{M}_i(u_{k}-u_h)  \ud \theta, %  \theta \in [0,1].
\end{align*}
Using \eqref{ch}, \eqref{m-est2}, \eqref{m-est} and the generalized Lipschitz continuity property of $F'_h$, we get
\begin{align*}
C_h ||u_{k+1}-u_h||_1 & \leq \nu ||u_{k}-u_h||_1 +C_2 h^{-1-\frac{n}{2}}  ||u_{k}-u_h ||_1^2,
\end{align*}
and thus
\begin{align} \label{cvg-psd}
\begin{split}
||u_{k+1}-u_h||_1 & \leq \frac{\nu}{C_h} ||u_{k}-u_h||_1  +   \frac{C_2 h^{-1-\frac{n}{2}} }{C_h}  ||u_{k}-u_h ||_1^2,
\end{split}
\end{align}
By the definition of $Y^h$ and the choice of $\delta$, we have
%\begin{align*}
$$
 \frac{C_2 h^{-1-\frac{n}{2}} }{C_h}  ||u_{k}-u_h ||_1 \leq \frac{\delta C_2}{4 C_h} < \frac{1}{2}.
$$
%\end{align*}
This gives by \eqref{nu-cond2} and \eqref{cvg-psd}
\begin{equation} \label{cvg-p}
 ||u_{k+1}-u_h||_1 <  ||u_{k}-u_h||_1,
\end{equation}
and we have proved that  $u_{k+1} \in Y^h$ when $u_{k} \in Y^h$.

We now assume that $u_0$ is chosen in $Y^h$. We have for $i=1,2$ %\eqref{nvar12}
\begin{align*}
\<F_h(u_{k+1}) -f, \psi_h\> & = \<F_h(u_{k+1}) - F_h(u_k) + F_h(u_k) -f,\psi_h \>\\
& =\<F_h(u_{k+1}) - F_h(u_k), \psi_h \> -  \< \nu \mathcal{M}_i(u_{k+1}-u_k),\psi_h \> \\
& \qquad \qquad \qquad \qquad \qquad \qquad - \<F_h'(u_k) (u_{k+1}-u_k), \psi_h \>\\
& = \< \int_0^1 [F_h'(u_k+ t (u_{k+1}-u_k)) -F_h'(u_k)] (u_{k+1}-u_k) \ud t \\
& \qquad \qquad \qquad \qquad \qquad \qquad \qquad -  \< \nu \mathcal{M}_i(u_{k+1}-u_k)
, \psi_h \>.
\end{align*}
We conclude from the general Lipschitz continuity property with $v_h=u_k$, an inverse estimate and  \eqref{m-est},
%\begin{align*}
%$||\mathcal{M}_i(u_{k+1}-u_k)|| \leq ||u_{k+1}-u_k||_1, i=1,2$.
%\end{align*}
%We therefore have
$$
||F_h(u_{k+1}) -f|| \leq C_2 h^{-1-\frac{n}{2}} ||u_{k+1}-u_k||_1^2 + \nu ||u_{k+1}-u_k||_1.
$$
%with $c(h) \to 0$ as $h \to 0$. % by the assumption $0 < \nu < m/(2 C_1^2)$.

Finally, by \eqref{m-est2} and the definition of the iterative methods \eqref{nvar12} and \eqref{nvar11}, 
$$
||u_{k+1}-u_k||_1 \leq\frac{1}{C_h} ||F_h(u_k) -f ||.
$$
We conclude that
$$
||F_h(u_{k+1}) -f|| \leq c_1(h) ||F_h(u_k) -f ||^2 +c_0(h) ||F_h(u_k) -f ||,
$$
for constants $c_0(h)=\nu/C_h$ and $c_1(h)$ which depends on $h$. By the assumption \eqref{nu-cond2} on $\nu$, we have
$c_0(h)< 1/2$.

Let $q=||F_h(u_0) -f ||$ and assume that $u_0$ is chosen so that $c_1(h) q < 1-c_0(h)$. We then have $$s\equiv c_1(h) q + c_0(h) < 1.$$ 
It follows that
$$
||F_h(u_{1}) -f|| \leq c_1(h) ||F_h(u_{0}) -f||^2 + c_0(h) ||F_h(u_{0}) -f|| =  s q, 
$$
and since $s < 1$,
\begin{align*}
||F_h(u_{2}) -f|| & \leq  c_1(h) ||F_h(u_{1}) -f||^2 + c_0(h) ||F_h(u_{1}) -f|| \\
& = ||F_h(u_{1}) -f|| (c_1(h) ||F_h(u_{1}) -f||^{} +c_0(h)) \\
& \leq ||F_h(u_{1}) -f|| (c_1(h) s q +  c_0(h)) \\
& \leq ||F_h(u_{1}) -f|| s \leq s^2 q.
\end{align*}
We conclude that $||F_h(u_{k}) -f|| \leq s^k q$. Using $F_h(u_h)=f$ and the Mean Value Theorem 
\begin{align*}
\< F_h(u_{k}) -f,  u_k-u_h \> &= \< F_h(u_{k}) - F_h(u_h),  u_k-u_h \> \\
& = \< \int_0^1 F'_h(t u_k + (1-t) u_h )(u_k-u_h)  \ud t ,  u_k-u_h\> \\
&  =\< \int_0^1 F'_h(t u_k + (1-t) u_h )(u_k-u_h),  u_k-u_h\>  \ud t.  \\
\end{align*}
Thus integrating \eqref{fundpos} with respect to $t$ %and \eqref{glc} 
we obtain
\begin{align*}
\frac{m}{C_1^2} ||u_k-u_h ||_1^2 & \leq \bigg|   \int_0^1 \< F'_h(t u_k + (1-t) u_h )(u_k-u_h),  u_k-u_h \> \ud t \bigg|\\
& = |\< F_h(u_{k}) -f,  u_k-u_h \>| \\
& \leq || F_h(u_{k}) -f|| \, ||u_k-u_h||_1.
\end{align*}
We conclude that
\begin{align*}
||u_k-u_h||_1 & \leq C ||F_h(u_{k}) -f|| \leq C q s^k,
\end{align*}
from which the convergence  follows. The convergence rate is given by \eqref{cvg-p}.
\end{proof}
\begin{rem}
The proof of convergence of the pseudo transient continuation methods also gives the convergence of Newton's method when $\nu=0$. In particular \eqref{cvg-psd} gives the quadratic convergence rate of Newton's method when $\nu=0$. The quadratic convergence rate of Newton's method was also proved in a more general context in \cite{Bohmer2008} where it is shown that the rate of convergence is independent of $h$, \cite{Bohmer2008} Theorem 9.1. The independence of the rate in terms of the mesh size is known as mesh independence principle.
\end{rem}

\begin{rem}
The introduction of the constant $C_h$ is motivated by our desire to have a unified analysis in \eqref{m-est2} for both types of pseudo transient continuation methods. Since
$$
||p||_1 \leq C ||-\nu \mathcal{M}_1(p) + F'_h(v_h)(p) ||, p \in V_0^h,
$$
we get for $\nu=0$ from \eqref{cvg-psd}
$$
||u_{k+1}-u_h||_1 \leq C h^{-1-\frac{n}{2}} ||u_{k}-u_h||_1^2,
$$
for a constant $C$ independent of $h$.

\end{rem}

\begin{rem}
The analysis above does not indicate whether \eqref{nvar12} should be preferred over \eqref{nvar11}. We view \eqref{nvar12} as a preconditioned version
of \eqref{nvar11}. Moreover, the numerical results indicate that the use of the Laplacian preconditioner improves the convexity property of the numerical solution. 
\end{rem}
%We now turn to one of the main results of this paper, which is the convergence analysis of \eqref{m3} for the Monge-Amp\`ere equation. We consider the %following 
%discrete variational formulation: Given $\nu >0$ and a suitable initial guess, find $u_{k+1}^h \in V^h$ such that
%\begin{align} 
%\begin{split}
%-\nu  \int_{\Omega} \nabla u_{k+1}^h \cdot \nabla w_h \ud x  &  =
%-\nu \int_{\Omega} \nabla u_{k}^h \cdot \nabla w_h \ud x  +
%  \int_{\Omega} (\det D^2 u_k^h-f) w_h  \ud x ,  \label{nvar3}
%\end{split}
%\end{align}
%holds for all $w_h \in V_0^h$.

\section{Convergence of the time marching methods} \label{time}

We now turn to the proof of one of the main results of this paper, the convergence analysis of the iterative method \eqref{m2} for the Monge-Amp\`ere
equation. 

Let $\nu =(M+m)/2$ and define a mapping $T_1: Y^h \to (V_0^h)'$ by 
\begin{equation}
\<T_1(v_h),\psi_h\> =  \int_{\Omega} D v_h \cdot D \psi_h \ud x + \frac{1}{\nu} \int_{\Omega} (\det D^2 v_h-f) \, \psi_h \ud x, 
\end{equation}
for $v_h \in Y^h, \psi_h \in V_0^h$.
The following lemma will make it possible to show that $T_1$ is a strict contraction.
\begin{lemma}\label{lem1cont} For $v_h \in Y^h$,
\begin{align*}
||T_1'(v_h)||_*\equiv  & \sup_{\psi_h \in V_0^h, \psi_h \neq 0} \frac{||T_1'(v_h)(\psi_h)||}{|\psi_h|_1} \\
&  \leq  
\sup_{\psi_h \in V_0^h, \psi_h \neq 0} \frac{|T_1'(v_h)(\psi_h)(\psi_h)|}{|\psi_h|_1^2} \leq \frac{M-m}{M+m}.
\end{align*}
\end{lemma}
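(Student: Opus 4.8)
The plan is to compute $T_1'(v_h)$ explicitly as a symmetric bilinear form, deduce the first inequality from the general fact that a bounded symmetric bilinear form on a (pre-)Hilbert space is controlled by its values on the diagonal, and deduce the second inequality from the spectral bounds of Lemma \ref{lem-1} together with the particular choice $\nu=(M+m)/2$.

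\emph{Computing the derivative.} Writing $\langle T_1(v_h),\psi_h\rangle = \langle\mathcal M_1(v_h),\psi_h\rangle + \frac1\nu\int_\Omega(\det D^2 v_h - f)\psi_h\,dx$ with $\mathcal M_1$ the (linear) operator from the proof of Theorem \ref{pseudo-cvg}, and using Lemma \ref{det-lem} for the derivative of $v_h\mapsto\det D^2 v_h$ followed by the same integration by parts as in the derivation of \eqref{Frechet-h} (the interior-edge contributions cancel because $v_h\in C^1(\Omega)$, and the boundary term vanishes since $\psi_h\in H_0^1(\Omega)$), one obtains for $w,\psi_h\in V_0^h$
\[
 a(w,\psi_h):=\langle T_1'(v_h)(w),\psi_h\rangle=\int_\Omega Dw\cdot D\psi_h\,dx-\frac1\nu\int_\Omega[(\cof D^2 v_h)Dw]\cdot D\psi_h\,dx,
\]
the cofactor being taken elementwise over $\mathcal T$. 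Since $D^2 v_h$ is symmetric on each element, so is $\cof D^2 v_h$, hence $a(w,\psi_h)=a(\psi_h,w)$.

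\emph{First inequality.} Because $\|\eta_h\|_1\ge|\eta_h|_1$, we have $\|T_1'(v_h)(w)\|\le\sup_{\eta_h\ne0}|a(w,\eta_h)|/|\eta_h|_1$, so it suffices to bound $a$ as a bilinear form on $V_0^h$ equipped with $|\cdot|_1$, which is a pre-Hilbert norm coming from the inner product $\int_\Omega Dw\cdot D\eta_h\,dx$. Set $M_0:=\sup_{\psi_h\ne0}|a(\psi_h,\psi_h)|/|\psi_h|_1^2$. For $w,\psi_h$ with $|w|_1=|\psi_h|_1$, symmetry gives $a(w+\psi_h,w+\psi_h)-a(w-\psi_h,w-\psi_h)=4a(w,\psi_h)$, so the parallelogram identity $|w+\psi_h|_1^2+|w-\psi_h|_1^2=2(|w|_1^2+|\psi_h|_1^2)$ yields $4|a(w,\psi_h)|\le M_0\big(|w+\psi_h|_1^2+|w-\psi_h|_1^2\big)=4M_0|w|_1|\psi_h|_1$; by bilinearity this extends to $|a(w,\psi_h)|\le M_0|w|_1|\psi_h|_1$ for all $w,\psi_h$. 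Taking the supremum over $\psi_h$ and then over $w$ gives $\|T_1'(v_h)\|_*\le M_0$, which is the first inequality.

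\emph{Second inequality.} Evaluating on the diagonal, $a(\psi_h,\psi_h)=|\psi_h|_1^2-\frac1\nu\int_\Omega[(\cof D^2 v_h)D\psi_h]\cdot D\psi_h\,dx$. Since $v_h\in Y^h\subset X^h$ by Remark \ref{Yh-rem}, Lemma \ref{lem-1} (summing the elementwise estimate \eqref{p-d-K} over $\mathcal T$) gives, for $h$ small, $m|\psi_h|_1^2\le\int_\Omega[(\cof D^2 v_h)D\psi_h]\cdot D\psi_h\,dx\le M|\psi_h|_1^2$. Hence $\big(1-\tfrac M\nu\big)|\psi_h|_1^2\le a(\psi_h,\psi_h)\le\big(1-\tfrac m\nu\big)|\psi_h|_1^2$, and with $\nu=(M+m)/2$ the two coefficients become $-(M-m)/(M+m)$ and $(M-m)/(M+m)$, so $|a(\psi_h,\psi_h)|\le\frac{M-m}{M+m}|\psi_h|_1^2$, i.e. $M_0\le\frac{M-m}{M+m}$, as required.

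\emph{Main obstacle.} Nothing here is genuinely hard; the only step requiring attention is the first inequality, where one must exploit the symmetry of $a$ (inherited from the symmetry of $\cof D^2 v_h$) to run the polarization/parallelogram argument, and keep in mind that on $V_0^h\subset H_0^1(\Omega)$ the seminorm $|\cdot|_1$ is precisely the Hilbert norm associated with $\int_\Omega Dw\cdot D\eta_h\,dx$, so the parallelogram law is available.
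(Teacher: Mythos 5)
Your proposal is correct and follows essentially the same route as the paper: both compute $T_1'(v_h)$ as the symmetric bilinear form $\int_\Omega [(I-\frac{1}{\nu}\cof D^2 v_h)D\mu_h]\cdot D\eta_h\,dx$, bound its off-diagonal values by the diagonal ones via polarization and the parallelogram identity in the $|\cdot|_1$ inner product, and then invoke Lemma \ref{lem-1} (using $Y^h\subset X^h$) with $\nu=(M+m)/2$ to get the constant $\frac{M-m}{M+m}$. The only cosmetic difference is that you normalize by restricting to $|w|_1=|\psi_h|_1$ and extend by homogeneity, while the paper rescales $\mu_h,\eta_h$ to unit seminorm directly.
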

\begin{proof}
Let $\alpha=\text{sup}_{\psi_h \in V_0^h, \psi_h \neq 0} \frac{|T_1'(v_h)(\psi_h)(\psi_h)|}{|\psi_h|_1^2}$. We have
\begin{equation} \label{ftip}
|T_1'(v_h)(\psi_h)(\psi_h)| \leq \alpha |\psi_h|_1^2, \psi_h \in V_0^h.
\end{equation}
 Since for $\mu_h \in V_0^h$, $||T_1'(v_h)(\mu_h)||=\text{sup}_{\eta_h \in V_0^h, \eta_h \neq 0} |T_1'(v_h)(\mu_h)(\eta_h)|/|\eta_h|_1$, we obtain
\begin{align*}
||T_1'(v_h)||_* = \text{sup}_{\mu_h, \eta_h \in V_0^h, \mu_h, \eta_h \neq 0} \frac{|T_1'(v_h)(\mu_h)(\eta_h)|}{|\mu_h|_1|\eta_h|_1}.
\end{align*}
But
\begin{align*}
T_1'(v_h)(\mu_h)(\eta_h) & = \int_{\Omega} D \mu_h \cdot D \eta_h \ud x -\frac{1}{\nu}
\int_{\Omega} [(\text{cof} \, D^2 v_h) D \mu_h] \cdot D \eta_h \, \ud x\\
& = \int_{\Omega} [(I -\frac{1}{\nu}(\text{cof} \, D^2 v_h)) D \mu_h] \cdot D \eta_h \, \ud x,
\end{align*}
where $I$ denotes the $n \times n$ identity matrix. 
Hence
\begin{align*}
\frac{T_1'(v_h)(\mu_h)(\eta_h)}{|\mu_h|_1|\eta_h|_1} = 
\int_{\Omega} [(I -\frac{1}{\nu}(\text{cof} \, D^2 v_h)) D \frac{\mu_h}{|\mu_h|_1}] \cdot D \frac{\eta_h}{|\eta_h|_1} \, \ud x.
\end{align*}
Next, we note that for fixed $v_h \in Y^h$, we can define a bilinear form on $V_0^h$ by the formula
\begin{align*}
(p,q) & = \int_{\Omega} [(I -\frac{1}{\nu}(\text{cof} \, D^2 v_h)) D p] \cdot D q \, \ud x.
\end{align*}
Then since
$$
(p,q) = \frac{1}{4} ((p+q,p+q) - (p-q,p-q)),
$$
we obtain
\begin{align*}
||T_1'(v_h)||_* &= \sup_{\mu_h, \eta_h \in V_0^h, \mu_h, \eta_h \neq 0} \frac{1}{4}
\bigg| \int_{\Omega} [(I -\frac{1}{\nu}\text{cof} \, D^2 v_h) \\
& \qquad \qquad \qquad D(\frac{\mu_h}{|\mu_h|_1}+ \frac{\eta_h}{|\eta_h|_1}] \cdot D (\frac{\mu_h}{|\mu_h|_1}+ \frac{\eta_h}{|\eta_h|_1}) \, \ud x \\
& \quad \quad  - \int_{\Omega} [(I -\frac{1}{\nu}\text{cof} \, D^2 v_h) D(\frac{\mu_h}{|\mu_h|_1}- \frac{\eta_h}{|\eta_h|_1}] \cdot 
D (\frac{\mu_h}{|\mu_h|_1}- \frac{\eta_h}{|\eta_h|_1}) \, \ud x \bigg|\\
\leq & \frac{\alpha}{4} \bigg(\bigg|\frac{\mu_h}{|\mu_h|_1}+ \frac{\eta_h}{|\eta_h|_1}\bigg|_1^2 + \bigg|\frac{\mu_h}{|\mu_h|_1}- \frac{\eta_h}{|\eta_h|_1}\bigg|_1^2 \bigg)= \alpha.
\end{align*}
By Lemma \ref{lem-1} we have
\begin{equation*}
(1-\frac{M}{\nu}) |w|_1^2 \leq \int_{\Omega} [( I -\frac{1}{\nu}(\text{cof} \, D^2 v_h)) D w] \cdot D w \, \ud x \leq  (1-\frac{m}{\nu}) |w|_1^2, w \in H_0^1(\Omega). 
\end{equation*}
Since $\nu =(M+m)/2$, $1-M/\nu= -(M-m)/(M+m)$ and $1-m/\nu=(M-m)/(M+m)$, we conclude that $\alpha \leq (M-m)/(M+m)$.
\end{proof}
We can now prove the following lemma
\begin{lemma}\label{lem2cont}
The mapping $T_1$ is a strict contraction in $Y^h$ with contraction constant $(M-m)/(M+m)$ for $\nu=(M+m)/2$.
\end{lemma}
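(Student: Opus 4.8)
The plan is to read off the contraction property of $T_1$ from the derivative estimate of Lemma~\ref{lem1cont} by invoking the Mean Value Theorem for Banach spaces recalled above on the (convex) set $Y^h$.

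First I would check that $Y^h$ is convex: it is the intersection of the affine subspace $\{\,v_h\in V^h : v_h=g_h\ \text{on}\ \partial\Omega\,\}$, whose underlying direction space is $V_0^h$, with the open $\|\cdot\|_1$-ball of radius $\delta h^{1+n/2}/4$ centered at $u_h$. Hence for $v_h,w_h\in Y^h$ and $t\in[0,1]$ the point $z_h(t):=(1-t)w_h+t v_h$ again lies in $Y^h$. Next I would record that $T_1$ is of class $C^1$ on $Y^h$, viewed as a relatively open subset of that affine subspace: the map $v_h\mapsto\int_\Omega Dv_h\cdot D\psi_h\,\ud x$ is linear, and $v_h\mapsto\frac1\nu\int_\Omega(\det D^2 v_h-f)\psi_h\,\ud x$ is polynomial in the entries of $D^2 v_h$, so both are smooth; by Lemma~\ref{det-lem} and integration by parts the Fréchet derivative of $T_1$ at $v_h$ is the bounded bilinear form
\[
T_1'(v_h)(\mu_h)(\eta_h)=\int_\Omega D\mu_h\cdot D\eta_h\,\ud x-\frac1\nu\int_\Omega[(\cof D^2 v_h)D\mu_h]\cdot D\eta_h\,\ud x,
\]
which is exactly the expression already used in the proof of Lemma~\ref{lem1cont}, and it depends continuously on $v_h$ since $\cof D^2 v_h$ does.

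Then I would apply the Mean Value Theorem to the $C^1$ map $p\mapsto T_1(w_h+p)$ along the segment $p=t(v_h-w_h)$, $t\in[0,1]$, which stays in the relevant relatively open set by convexity of $Y^h$, to get
\[
T_1(v_h)-T_1(w_h)=\int_0^1 T_1'\big(z_h(t)\big)(v_h-w_h)\,\ud t .
\]
Passing to the $(V_0^h)'$-norm dual to $|\cdot|_1$, using $\|T_1'(z_h(t))(v_h-w_h)\|\le\|T_1'(z_h(t))\|_*\,|v_h-w_h|_1$ and invoking Lemma~\ref{lem1cont} at each $z_h(t)\in Y^h$ to bound $\|T_1'(z_h(t))\|_*\le (M-m)/(M+m)$, I arrive at
\[
\|T_1(v_h)-T_1(w_h)\|\le\frac{M-m}{M+m}\,|v_h-w_h|_1 .
\]
Since $0\le m\le M$ and $m>0$ by Lemma~\ref{lem-1}, one has $(M-m)/(M+m)<1$, which is the asserted strict contraction constant for $\nu=(M+m)/2$.

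Given Lemma~\ref{lem1cont}, this is essentially a one-line argument; the only two points that call for care are the convexity of $Y^h$ — needed so that the derivative bound of Lemma~\ref{lem1cont} is available all along the connecting segment — and the $C^1$-regularity of $T_1$, required to legitimize the Mean Value Theorem. I do not expect either to be a genuine obstacle, so the substance of this section has already been absorbed into Lemma~\ref{lem1cont}.
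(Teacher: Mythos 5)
Your proposal is correct and follows essentially the same route as the paper: the paper also writes $T_1(w_h)-T_1(v_h)=\int_0^1 T_1'(v_h+t(w_h-v_h))(w_h-v_h)\,\ud t$ via the Mean Value Theorem, notes that the segment stays in $Y^h$, and then applies the bound $\|T_1'\|_*\le (M-m)/(M+m)$ from Lemma~\ref{lem1cont}. Your added remarks on the convexity of $Y^h$ and the $C^1$-regularity of $T_1$ simply make explicit what the paper leaves implicit.
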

\begin{proof}
Let $v_h$ and $w_h \in Y^h$. Then, using the Mean Value Theorem
\begin{align*}
||T_1(w_h)-T_1(v_h)|| & = ||\int_0^1 T_1'(v_h +t(w_h-v_h))(w_h-v_h) \ud t || \\
& \leq \int_0^1 ||T_1'(v_h +t(w_h-v_h))(w_h-v_h)|| \ud t.
\end{align*}
Since $w_h-v_h \in V_0^h$ and $v_h +t(w_h-v_h) \in Y^h, t \in [0,1]$, we obtain by Lemma \ref{lem1cont},
\begin{align*}
||T_1(w_h)-T_1(v_h)|| & \leq \int_0^1 \frac{M-m}{M+m} |w_h-v_h|_1 \ud t = \frac{M-m}{M+m} |w_h-v_h|_1.
\end{align*}
\end{proof}
\begin{rem}
For the operator $T_1$ to be a strict contraction, it is enough to have $\nu$ sufficiently large, i.e. $\nu >M$. %From the  proofs of Lemmas \ref{lem1cont} and \ref{lem2cont}, we need to have
%max $(|1-m/\nu|,|1-M/\nu|) < 1$. This is equivalent to $\nu > $ max $(m,M/2)$. % or $M/2 < \nu \leq m$. 
In our computations, the value of $\nu$ is chosen ''adaptively'', i.e. we start with the value $\nu=50$ and if necessary we reduce or increase it for better accuracy. The situation is similar to the setting of adaptive mesh refinements where it is not known in advance where to do a local refinement and decisions are made based on computed results.
\end{rem}

\begin{rem}\label{remgrad}
By the inverse inequality, we have
\begin{equation*}
C_3 h^2 |w_h|_1^2 \leq ||w_h||_0^2 \leq C_1^2 |w_h|_1^2, w_h \in V_0^h. 
\end{equation*}
We may assume that $C_1 \leq 1$ by assuming that the domain is contained in a cube of side length at most 1.
It follows that for $w_h \in V_0^h$
\begin{equation*}
(C_3 h^2-\frac{M}{\nu}) |w_h|_1^2 \leq \int_{\Omega} w_h^2 \ud x- \int_{\Omega} [ \frac{1}{\nu}(\cof  D^2 v_h) D w_h] \cdot D w_h \, \ud x \leq  (1-\frac{m}{\nu}) |w_h|_1^2. 
\end{equation*}
As in the proofs of Lemmas \ref{lem1cont} and \ref{lem2cont}, we conclude that for $ \nu > M/(C_3 h^2)$, the mapping $T_2: Y^h \to (V_0^h)'$ defined by 
\begin{equation}
<T_2(v_h),\psi_h\> =  \int_{\Omega}  v_h  \psi_h \ud x + \frac{1}{\nu} \int_{\Omega} (\det D^2 v_h-f) \, \psi_h \ud x, 
\end{equation}
for $v_h \in Y^h, \psi_h \in V_0^h$ is a strict contraction. %But $|C_5 h^2-\frac{M}{\nu}|< 1$ is equivalent to $(\nu < M/(C_5 h^2))$ or $(\nu \geq M/(C_5 h^2)$ and $h < 1/\sqrt{C_5})$.
%And similarly, $|C_6 -\frac{m}{\nu}|< 1$ is equivalent to $(\nu < m/(C_6 ))$ or $(\nu \geq m/(C_6 )$ and $1 < 1/C_6)$. %But $|w|_1 \leq ||w||_0 \leq \sqrt{C_2} |w|_1, w \in H_0^1(\Omega)$. Hence $C_2 \geq 1$ and 
%We conclude that $T_2$ is a strict contraction for $\nu$ sufficiently small or for $h$ sufficiently small and $\nu$ in an appropriate range.

\end{rem}
We can now claim our main result, which is the convergence to $u_h$ of the sequence defined by $u_{k+1} \in V^h, u_{k+1}=g_h$ on $\partial \Omega$ and
\begin{equation} \label{iter1}
\nu \int_{\Omega} D u_{k+1} \cdot D \psi_h \ud x  = \nu \int_{\Omega} D u_{k} \cdot D \psi_h \ud x 
+ \int_{\Omega} (\det D^2 u_k-f) \, \psi_h \ud x,
\end{equation}
for $\psi_h \in V_0^h$.
\begin{thm} \label{time-cvg}
Let $\Omega$ be convex with a Lipschitz continuous boundary and assume  that the spaces $V^h=S^r_d(\mathcal{T})$ have the optimal approximation property \eqref{schum} and satisfy the inverse estimates \eqref{inverse}.
The sequence defined by \eqref{iter1} converges to the unique strictly convex solution $u_h$ of \eqref{var1h} for any initial guess $u_0$ in $Y^h$
and a suitable $\nu>0$ with a linear convergence rate.
\end{thm}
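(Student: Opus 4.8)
The plan is to recognize that the iteration \eqref{iter1} is exactly the fixed-point iteration associated with the map $T_1$ from Lemma \ref{lem2cont}, and then to combine the contraction property with an a priori invariance argument showing that the iterates stay in $Y^h$. First I would observe that $u_h$, being the solution of \eqref{var1h}, satisfies $\int_\Omega Du_h\cdot D\psi_h\,\ud x = \<T_1(u_h),\psi_h\>$ for all $\psi_h\in V_0^h$ — indeed $\det D^2 u_h = f$ in $(V_0^h)'$ means the second term in $T_1(u_h)$ vanishes — so $u_h$ is the unique fixed point of the problem ``find $v_h\in V^h$, $v_h=g_h$ on $\partial\Omega$, with $\int_\Omega Dv_h\cdot D\psi_h\,\ud x=\<T_1(w_h),\psi_h\>$ for all $\psi_h$'' when $w_h=u_h$. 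Rewriting \eqref{iter1}, one step of the iteration takes $u_k\mapsto u_{k+1}$ where $u_{k+1}$ solves the discrete Poisson problem $\int_\Omega Du_{k+1}\cdot D\psi_h\,\ud x=\<T_1(u_k),\psi_h\>$ for all $\psi_h\in V_0^h$, with boundary data $g_h$; by the Lax--Milgram lemma on $V_0^h$ (the gradient bilinear form is coercive in $|\cdot|_1$, equivalently in $\|\cdot\|_1$ by Poincar\'e) this $u_{k+1}$ is well defined and unique.

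Next I would establish that $u_k\in Y^h$ implies $u_{k+1}\in Y^h$. Subtracting the defining relations for $u_{k+1}$ and $u_h$ and testing with $\psi_h=u_{k+1}-u_h\in V_0^h$ gives
\begin{align*}
|u_{k+1}-u_h|_1^2 = \<T_1(u_k)-T_1(u_h),\,u_{k+1}-u_h\> \leq \|T_1(u_k)-T_1(u_h)\|\,|u_{k+1}-u_h|_1,
\end{align*}
hence $|u_{k+1}-u_h|_1\le \|T_1(u_k)-T_1(u_h)\|\le \frac{M-m}{M+m}|u_k-u_h|_1$ by Lemma \ref{lem2cont}. Since $\frac{M-m}{M+m}<1$ and, by Poincar\'e, $\|u_{k+1}-u_h\|_1\le C_p\,|u_{k+1}-u_h|_1$, one needs to pass between the $|\cdot|_1$ norm and the $\|\cdot\|_1$ norm used in the definition of $Y^h$; the cleanest way is to note that $T_1$ being a strict contraction in the seminorm $|\cdot|_1$ on the affine subspace $\{v_h=g_h \text{ on }\partial\Omega\}$ combined with Poincar\'e shrinks $\|u_k-u_h\|_1$ up to the fixed factor $C_p\frac{M-m}{M+m}$ per step, which we may assume is $<1$ by the remark that $\nu$ (and hence the contraction constant, via $(M-m)/(M+m)$, or directly via the ``$\nu$ sufficiently large'' remark) can be chosen so that $C_p\frac{M-m}{M+m}<1$; then $\|u_{k+1}-u_h\|_1<\|u_k-u_h\|_1<\delta h^{1+n/2}/4$, so $u_{k+1}\in Y^h$.

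With invariance of $Y^h$ in hand, the rest is the Banach fixed point theorem: iterating the contraction estimate gives $|u_k-u_h|_1\le \big(\tfrac{M-m}{M+m}\big)^k |u_0-u_h|_1$, hence $\|u_k-u_h\|_1\le C_p\big(\tfrac{M-m}{M+m}\big)^k|u_0-u_h|_1\to 0$, which is convergence to $u_h$ with linear rate; uniqueness of the strictly convex solution $u_h$ was assumed, and the remark following Lemma \ref{lem-1} guarantees it lies in $X^h$ (indeed we take $u_h$ as the reference point of $Y^h$). The main obstacle I anticipate is the bookkeeping at the boundary between the $H^1$ seminorm, in which $T_1$ is a genuine contraction, and the full $H^1$ norm appearing in $Y^h$: one must verify that the constant $C_p\frac{M-m}{M+m}$ can indeed be forced below $1$, i.e.\ that enlarging $\nu$ (equivalently, as in the Remark after Lemma \ref{lem2cont}, taking $\nu>\max(m,M/2)$ large) drives the contraction factor below $1/C_p$, and that $Y^h$ is nonempty for $h$ small — both of which follow from the already-established bounds \eqref{psc}, Lemma \ref{lem0}, and the approximation property \eqref{schum}. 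A secondary point to check is that for $\nu$ in the admissible range one may also invoke the variant $T_2$ of Remark \ref{remgrad} if the $\|\cdot\|_0$-based iteration is preferred, but for \eqref{iter1} as written only $T_1$ is needed.
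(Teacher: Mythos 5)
Your route is the paper's own: you recognize \eqref{iter1} as the fixed point iteration for $T_1$, use $\det D^2 u_h = f$ in $(V_0^h)'$ to see that $u_h$ is the fixed point, subtract the two discrete Poisson problems, test with $\psi_h = u_{k+1}-u_h$, and invoke Lemma \ref{lem2cont} to get $|u_{k+1}-u_h|_1 \leq \frac{M-m}{M+m}\,|u_k-u_h|_1$, from which invariance of the ball around $u_h$ and linear convergence follow; the well-posedness of each step via Lax--Milgram is a correct (if routine) addition the paper leaves implicit.

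The one step that would fail is your proposed repair of the seminorm/norm mismatch. You claim that enlarging $\nu$ drives the contraction factor below $1/C_p$, so that $C_p\frac{M-m}{M+m}<1$ may be assumed. But the contraction constant of $T_1$ is not a decreasing function of $\nu$: by the remark following Lemma \ref{lem2cont} it is $\max\left(\left|1-\frac{m}{\nu}\right|,\left|1-\frac{M}{\nu}\right|\right)$, which is minimized exactly at $\nu=(M+m)/2$ with minimal value $\frac{M-m}{M+m}$; taking $\nu$ larger makes $\left|1-\frac{m}{\nu}\right|$ tend to $1$, so the factor deteriorates rather than improves. Moreover $m$ and $M$ are fixed by the eigenvalue bounds on $\cof D^2 v_h$ over $X^h$ (Lemma \ref{lem-1}), i.e.\ by the solution $u$ itself, so $\frac{M-m}{M+m}$ cannot be shrunk at all, and nothing guarantees $C_p\frac{M-m}{M+m}<1$. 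If that product is $\geq 1$, your argument that $u_k\in Y^h$ implies $u_{k+1}\in Y^h$ breaks down, and with it the right to apply Lemma \ref{lem2cont} at the next step. The paper closes this point differently: it simply equips the finite dimensional space $V_0^h$ with the seminorm $|\cdot|_1$ (a genuine norm there), so that the ball defining $Y^h$, the contraction, and the invariance are all measured in the same norm; equivalently one may shrink $\delta$ (a freedom the paper explicitly reserves, with Remark \ref{Yh-rem} still giving $Y^h\subset X^h$) or restrict the initial guess to a ball whose radius is reduced by the factor $C_p$. With that replacement for your $\nu$-tuning argument, your proof coincides with the paper's.
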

\begin{proof} 
The proof parallels Theorem 5.4 in \cite{Farago02}. 
Let us assume first that $u_k \in Y^h$. We have using \eqref{var1h}, or equivalently $\det D^2 u_h = f $ in $(V_0^h)'$,
\begin{align*}
\int_{\Omega} D (u_{k+1} - u_h) \cdot D \psi_h \ud x & =
\int_{\Omega} D (u_{k} - u_h) \cdot D \psi_h \ud x + \frac{1}{\nu} \int_{\Omega} \det D^2 u_k \, \psi_h \ud x \\
& \qquad \qquad \qquad \qquad - \frac{1}{\nu} \int_{\Omega} \det D^2 u_h \, \psi_h \ud x \\
& = \<T_1(u_k) - T_1(u_h),\psi_h \>.
\end{align*}
Taking $\psi_h=u_{k+1}-u_h$, we obtain
\begin{align*}
|u_{k+1} - u_h|_1^2 \leq ||T_1(u_k)-T_1(u_h)|| \, |u_{k+1} - u_h|_1 \leq \frac{M-m}{M+m} |u_{k} - u_h|_1 |u_{k+1} - u_h|_1,
\end{align*}
where for simplicity, we assume that the finite dimensional $V_0^h$ is equipped with the $| . |_1$ norm of $H_0^1(\Omega)$. We conclude that
$$
|u_{k+1} - u_h|_1 \leq \frac{M-m}{M+m} |u_{k} - u_h|_1.
$$
This also shows that if $u_{k} \in Y^h,$ then $u_{k+1} \in Y^h$ and concludes the proof.
\end{proof}
\begin{rem}It follows from the above result and Remark \ref{remgrad} that for a suitable initial guess and a suitable $\nu >0$, the sequence defined by
\begin{equation} \label{iter2}
\nu \int_{\Omega}  u_{k+1}  \psi_h \ud x  = \nu \int_{\Omega}  u_{k} \psi_h \ud x 
+ \int_{\Omega} (\det D^2 u_k-f) \, \psi_h \ud x,
\psi_h \in V_0^h, 
\end{equation}
with $u_{k+1} \in V^h, u_{k+1}=g_h \, \text{on} \, \partial \Omega$ also converges to the unique strictly convex solution of \ref{var1h}. Obviously the convergence
properties of \eqref{iter1} and \eqref{iter2} depend on the contraction constants of $T_1$ and $T_2$ respectively. Thus \eqref{iter1} is more robust than \eqref{iter2} in the sense that the choice of $\nu$ for \eqref{iter1} is less dependent on the discretization parameter $h$. As suggested in 
\cite{GlowinskiICIAM07} in the context of monotone schemes, the use of the Laplacian preconditioner results in a more efficient algorithm.

\end{rem}

\section{Numerical Results} \label{numerical}
The numerical results are obtained with the spline element method which we first review. We conclude the section with some heuristics about why our methods appear to enforce convexity.

\subsection{Spline element discretization} \label{spem}
We refer to \cite{Awanou2003,Awanou2005a,Awanou2006,Baramidze2006,Hu2007,Awanou2008} for a description of the spline element method.
We describe the method for linear problems and recall that the problems \eqref{m2} are linear problems.
Let $u \in V=H_0^m(\Omega), m \geq 1$ solve a variational problem $a(u,v)=f(v)$ with the conditions of the Lax-Milgram lemma satisfied. Take
$V_h$ as the spline space $S^r_d( \mathcal{T} )$ of smoothness $r$ and degree $d$, \eqref{sspaces}.
For $r=0$ and $d=1$ we have the space of piecewise linear continuous functions.

First, start with a representation of a piecewise discontinuous polynomial as a vector in $\mathbb{R}^N$, for some integer $N>0$.
Then express boundary conditions and constraints including global continuity or smoothness conditions as linear relations.
In our work, we use the Bernstein basis representation, \cite{Awanou2003,Awanou2008} which is very convenient to express smoothness conditions
and very popular in computer aided geometric design. Hence the term ``spline'' in the name of the method.
We can therefore identify the space $V_h$ with $\{c \in \R^N, R c = G \}$ for some integer $N$,
matrix $R$ and vector $G$. The discrete problem consists in finding
$c \in V_h, c^T K d = F^T d$ for all $d \in V_h$ for a suitable stiffness matrix $K$ and a load vector $F$. 
Introducing a Lagrange multiplier $\lambda$, 
the functional 
$$
K(c)d - L^Td + \lambda^T R d,
$$
vanishes identically on $V_h$. The stronger condition
$$
K(c) + \lambda^T R = L^T,
$$ 
along with the side condition $R c =G$ are the discrete equations to be solved.
We are lead to saddle point problems
\begin{align*}
\begin{split}
\left(\begin{array}{cc} K & R^T \\R & 0
\end{array} \right) \left(
\begin{array}{c} \mathbf{c} \\ \mathbf{\lambda}
\end{array}\right) = \left[
\begin{array}{c} F\\G
\end{array}\right]. \label{eqq4}
\end{split}
\end{align*}  
The ellipticity condition assures uniqueness of the component $c$
and the saddle point problems are solved by a version of the augmented Lagrangian algorithm
\begin{equation}\label{iter0}
(K +  \frac{1} { \mu} R^T R) c^{(l+1)} = K^T c^{(l)}  +  \frac{1} {
\mu} R^T G, \quad l=1,2,\ldots
\end{equation}
The convergence properties of the iterative method were given in \cite{Awanou2005}. Extensive implementation details can be found in \cite{Awanou2003,Awanou2006}.

\subsection{Numerical results}% with $L$ the identity}
For $n=2$, the computational domain is the unit square $[0,1]^2$
which is first divided into squares of side length $h$. Then each square is 
divided into two triangles by the diagonal with negative slope.
For $n=3$, the initial tetrahedral partition $\mathcal{I}_1$ consists in six tetrahedra. Each tetrahedron is then uniformly refined
into 8 subtetrahedra forming $\mathcal{I}_2$. In the tables, $n_{it}$ denotes the number of iterations. We refer to \cite{Awanou2003,Awanou2006} for implementation details of the method. All numerical experiments are with the versions of the iterative methods with Laplacian preconditioner. 

In general we did not try to choose the value of $\nu$ that would give the smallest number of iterations except in Tables \ref{tab-comp1} and \ref{tab-comp2} where we compare the performance of the two methods.

%\subsection{Monge-Amp\`ere}
We use some standard test cases for numerical evidence for convergence to non smooth solutions of the elliptic Monge-Ampere equation.

Test 1: $u(x,y)=e^{(x^2+y^2)/2}$ so that 
$f(x,y)=(1+x^2+y^2)e^{(x^2+y^2)}$ and $g(x,y)=e^{(x^2+y^2)/2}$ on $\partial \Omega$.

Test 2: $u(x,y,z)=e^{(x^2+y^2+z^2)/3}$ so that
$f(x,y,z)=8/81(3+2(x^2+y^2+z^2)e^{(x^2+y^2+z^2)}$ and $g(x,y,z)=e^{(x^2+y^2+z^2)/3}$ on 
$\partial \Omega$. 

Barring roundoff errors, the methods introduced in this paper capture smooth solutions. For the two dimensional test function, Test 1, we give numerical results for successive refinements and for the three dimensional test function, we give numerical results for increasing values of the degree $d$ on two successive refinements.

\begin{table}
\begin{tabular}{|c|c|c|c|c|c|c|c|} \hline
$h$  & $n_{it}$ & $L^{2}$ norm & rate & $H^{1}$ norm & rate & $H^{2}$ norm & rate\\ \hline
$1/2^1$  & 236  & 4.1569 $10^{-6}$ & & 6.5142 $10^{-5}$ &  &1.9364 $10^{-3}$ &  \\ \hline
$1/2^2$  & 233 &  1.1504 $10^{-7}$ &5.17 & 2.3915 $10^{-6}$  & 4.77  &1.3444 $10^{-4}$& 3.85\\ \hline
$1/2^3$  & 233  &   3.2406 $10^{-9}$&5.15 &  8.4120 $10^{-8}$ & 4.83 &8.9366 $10^{-6}$ & 3.92 \\ \hline
$1/2^4$  & 233  &  4.5857 $10^{-10}$&2.82 &  4.7246 $10^{-9}$ & 4.15 & 6.0706 $10^{-7}$ & 3.88   \\ \hline
%$1/2^4$  &  \\ \hline
%$1/2^5$  &  \\ \hline
\end{tabular}
\bigskip
\caption{Time marching method for Test 1, $S^1_5$, $\nu=50$} \label{tab3}
\end{table}

\begin{table}
\begin{tabular}{|c|c|c|c|c|} \hline
d & $n_{it}$ &$L^{2}$ norm & $H^{1}$ norm &  $H^{2}$ norm \\ \hline
3 & 1&1.2338 $10^{-2}$ &  7.6984 $10^{-2}$ &    4.4411 $10^{-1}$ \\ \hline
4 & 270 &1.6289 $10^{-3}$ &  1.4719 $10^{-2}$ &    1.3983 $10^{-1}$ \\ \hline
5 & 135&1.5333 $10^{-3}$ &  8.7312 $10^{-3}$ &    6.0412 $10^{-2}$\\ \hline
6 & 424 &1.2491 $10^{-4}$ &  9.7458 $10^{-4}$ &    1.0473 $10^{-2}$ \\ \hline
Rate &  & 0.18 $0.25^{d-1}$ & 4.57 $0.25^d$ & 60.85 $0.3^{d+1}$\\ \hline
%7 &1.8956 $10^{-6}$ & 1.7983 $10^{-5}$  &4.8870 $10^{-4}$   \\ \hline
%8 & & &   \\ \hline
%Rate & $10^{2.3278}$ $d^{-9.4276}$& $10^{3.0318}$  $d^{-9.0347}$& $10^{3.4794}$ $d^{-7.8305}$ \\ \hline
\end{tabular}
\bigskip
\caption{Time marching method for Test 2 (3D) on $ \mathcal{I}_1 $, $\nu=50$} \label{tab4}
\end{table}

\begin{table}
\begin{tabular}{|c|c|c|c|c|c|} \hline
d & $n_{it}$ &$L^{2}$ norm & $H^{1}$ norm &  $H^{2}$ norm \\ \hline
3 & 1&3.1739 $10^{-3}$ &  2.3005 $10^{-2}$ &   2.4496 $10^{-1}$ \\ \hline
4 &651 &3.2385 $10^{-4}$ &  3.5599 $10^{-3}$ &   5.2262 $10^{-2}$  \\ \hline
5 &744 &2.2730 $10^{-5}$ &  3.8977 $10^{-4}$ &   8.8978 $10^{-3}$  \\ \hline
6 &652 &1.1956 $10^{-6}$ &  2.2056 $10^{-5}$ &   6.0437 $10^{-4}$  \\ \hline
Rate &  & 0.72 $0.072^{d-1}$ & 29.44 $0.1^d$ & 861.43 $0.14^{d+1}$\\ \hline
%7 &1.5283$10^{-7}$ &1.6753$10^{-6}$ & 5.6189$10^{-5}$  \\ \hline
%8 &1.7888 $10^{-8}$ &2.2748 $10^{-7}$  & 1.2765 $10^{-5}$ \\ \hline
%Rate & $10^{3.2207}$ $d^{-12.0639}$ & $10^{3.9152}$ $d^{-11.4977}$ & $10^{4.0515}$ $d^{-9.7276}$ \\ \hline
%Rate & $10^{3.1038}$ $d^{-11.8706}$  & $10^{3.6366}$ $d^{-11.0367}$  & $10^{3.7955}$ $d^{-9.3039}$ \\ \hline
\end{tabular}
\bigskip
\caption{Time marching method for Test 2 (3D) on $\mathcal{I}_2$, $\nu=50$} \label{tab5}
\end{table}

In the context of approximations by finite dimensional spaces, many finite element methods proposed, \cite{Dean2006,Feng2009b}, 
fail to fully capture the convexity of the solution
on the test case 

Test 3: $g(x,y)=0$ and $f(x,y)=1$. 

In Figure \ref{fig1} we give a plot of the graph of the solution as well as a section of the graph along the line $y=x$.

For the same test case, there is a concave solution. The concavity property of the concave solution obtained with the time marching method are better than the one obtained by the vanishing
moment methodology, \cite{Feng2009b}. This is illustrated in Figure \ref{fig2}.
\begin{figure}[tbp]
\begin{center}
\includegraphics[angle=0, height=6cm]{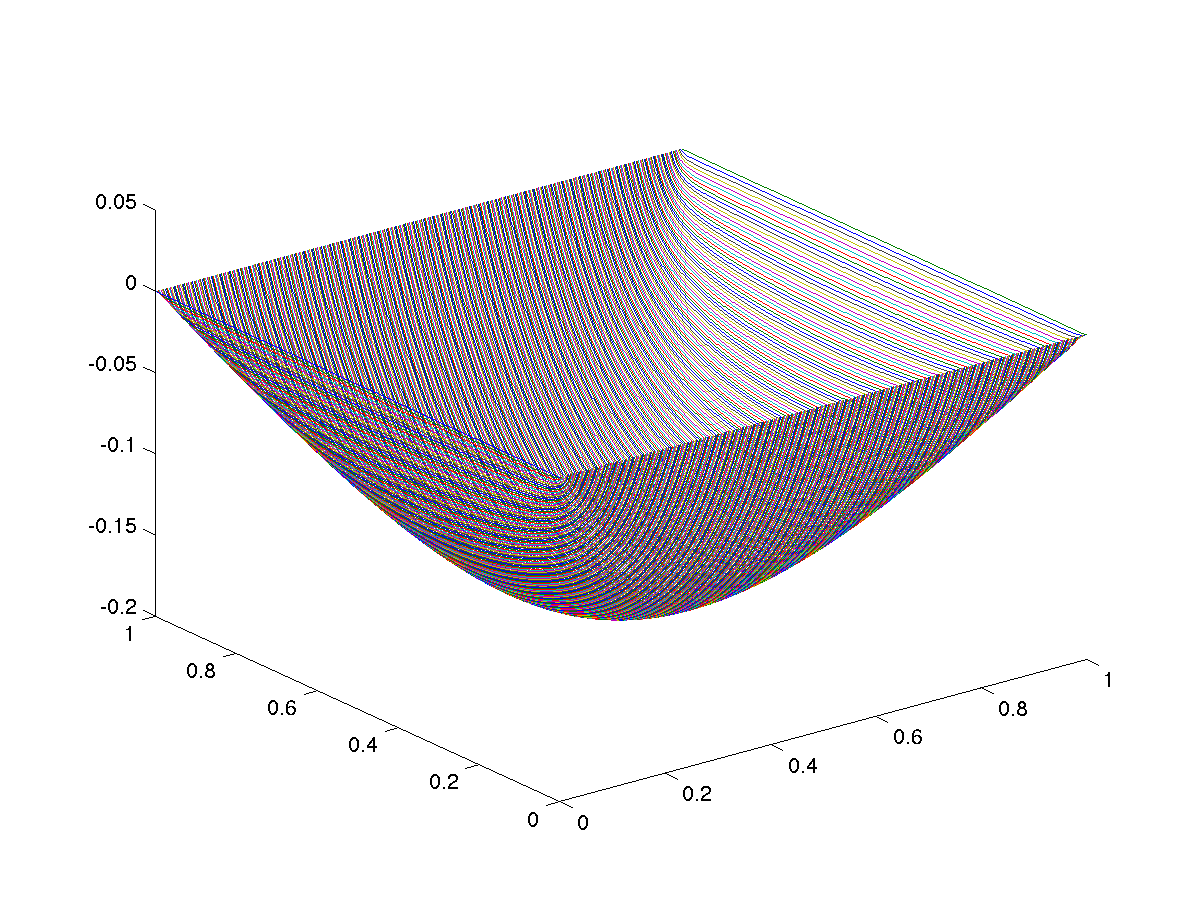}
\includegraphics[angle=0, height=7cm]{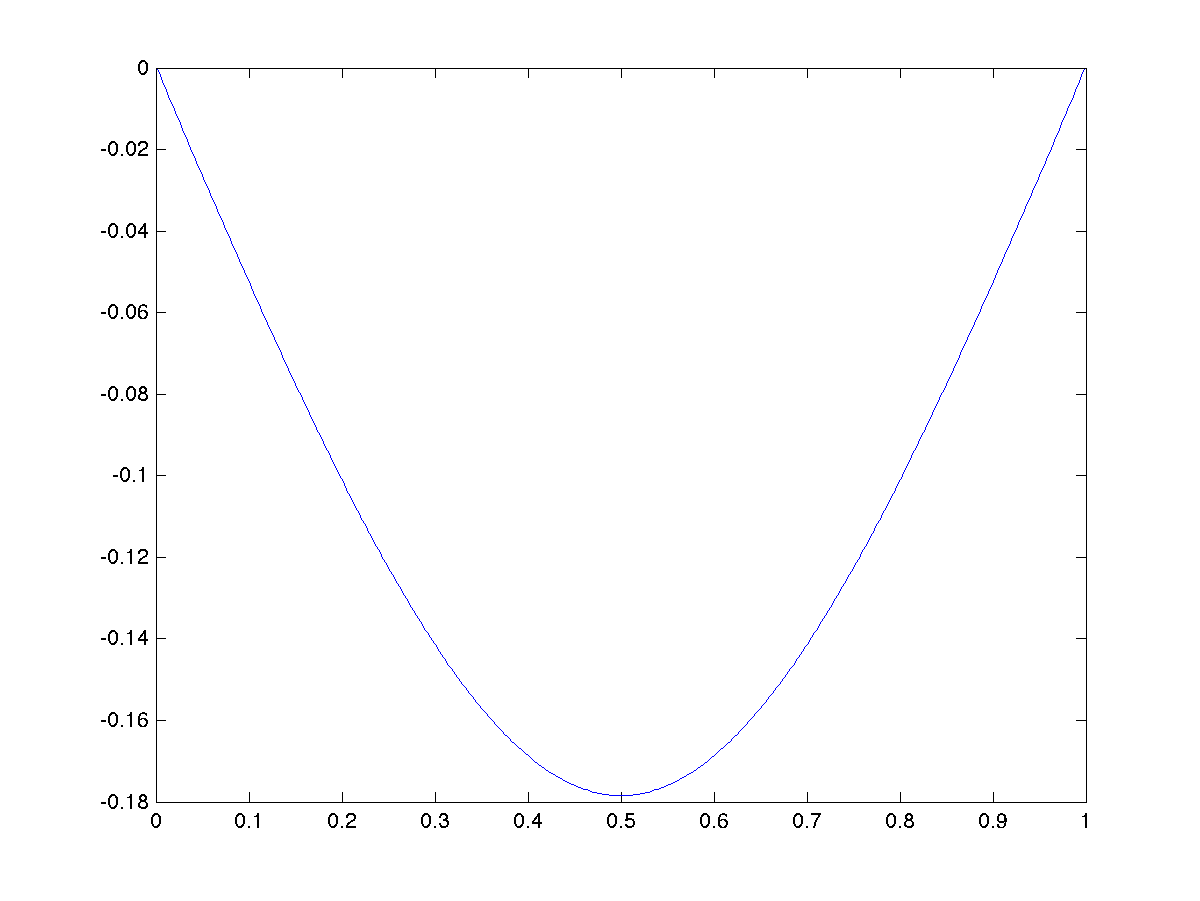}
%PdiffD2C2N2d5r1eta10Graph
\end{center}
\caption{Pseudo transient Test 3, convex solution: $h=1/2^4, d=5, \nu=7.5$.} \label{fig1}% Approximate convex solution}
\end{figure}
%\end{rem}
\begin{figure}[tbp]
\begin{center}
\includegraphics[angle=0, height=6cm]{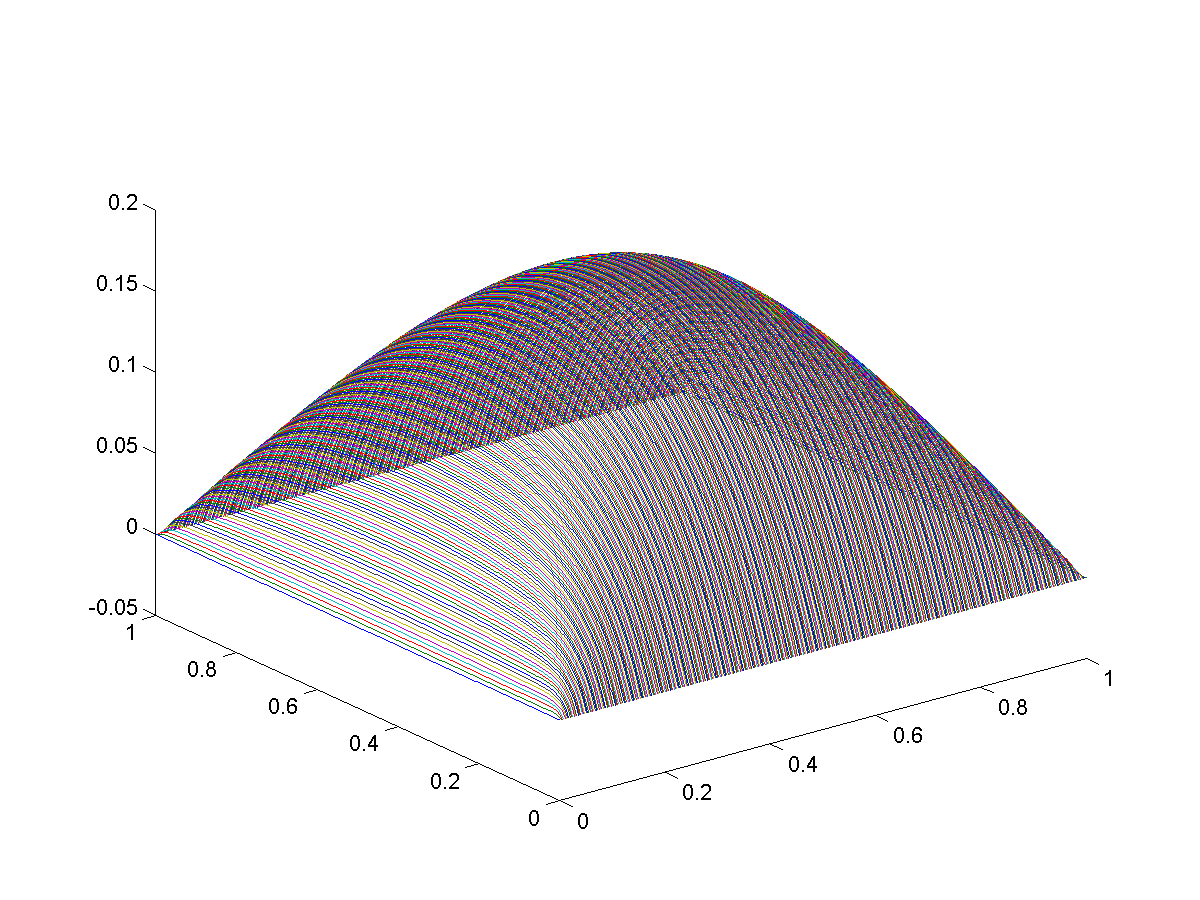}
\includegraphics[angle=0, height=7cm]{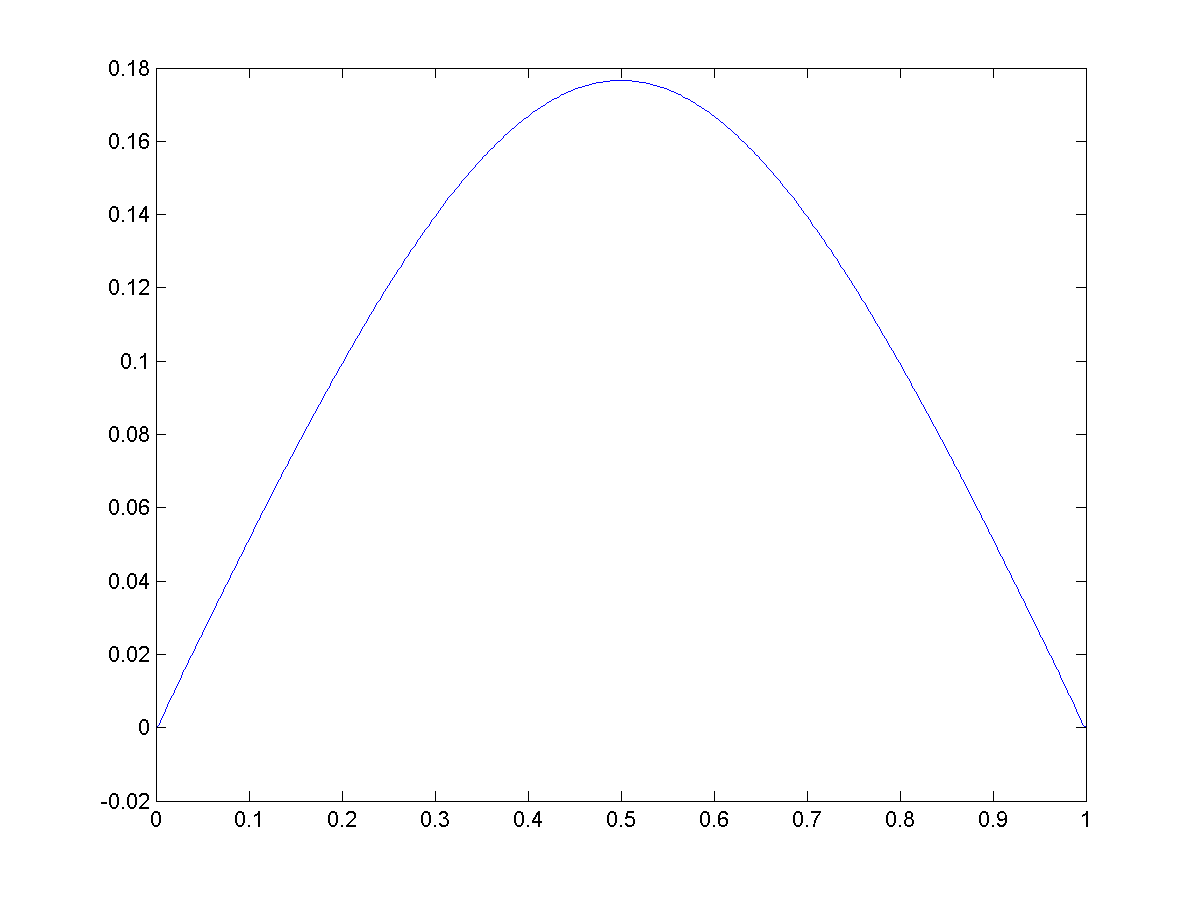}
%PdiffD2C2N2d5r1eta10Graph
\end{center}
\caption{Time marching, Test 3, concave solution: $h=1/2^4, d=5, \nu=50$.} \label{fig2}% Approximate convex solution}
\end{figure}

%\begin{figure}[tbp]
%\begin{center}
%\includegraphics[angle=0, height=5cm]{PdiffD2C2N2d5r1eta10Graph.png}
%\includegraphics[angle=0, height=3.5cm]{}
%PdiffD2C2N2d5r1eta10Graph
%\end{center}
%\caption{Test 6, $L=\Delta$, $\mathcal{I}_3, d=5, \nu=10$.} \label{fig1}% Approximate convex solution}
%\end{figure}

We now discuss how the two methods compare. First, we are solving the same discrete equations \eqref{var1h} by different iterative methods. Second, we noticed that the smaller $\nu$, the smaller the number of iterations. Thus for a smooth solution, the correct value of $\nu$ to take in the pseudo transient method is $\nu=0$ which is exactly Newton's method. In fact, Newton's method has been shown to have a quadratic convergence rate while the pseudo transient methods and time marching methods are shown in Theorems \ref{pseudo-cvg} and \ref{time-cvg} to have a linear convergence rate. Moreover the numerical errors of Tables \ref{tab3}, \ref{tab4} and \ref{tab5} are essentially the ones obtained with Newton's method as expected. We compare the performance of the methods on a non-smooth solution with known solution.  

Test 4:  $u(x,y)=-\sqrt{2-x^2-y^2}$ with corresponding $f$ and $g$.

\begin{table} %[ht]
\begin{tabular}{|c|c|c|c|c|c|} \hline
$h$  & $\nu$ & $n_{it}$ &    time & $L^2$ norm& rate \\ \hline
$1/2^1$   &  0 &   6 & $3.0328 10^{+0}$  & $2.1954 10^{-2}$ &\\ \hline
$1/2^2$  &  0   &  5  & $8.1365 10^{+0}$ & $3.6097 10^{-3}$& 2.60\\ \hline
$1/2^3$  & 0   &   6   &$3.8230 10^{+1}$ & $1.0685 10^{-3}$ & 1.76\\ \hline
$1/2^4$ & 3 &  56  & $1.5979 10^{+3}$       & $3.7666 10^{-4}$ & 1.50\\ \hline
\end{tabular}
\bigskip
\caption{Pseudo-transient method Test 4 $r=1, d=3$} \label{tab-comp1}
\end{table}

\begin{table}\label{tab-comp2}
\begin{tabular}{|c|c|c|c|c|c|} \hline
$h$  & $\nu$ & $n_{it}$   & time & $L^2$ norm& rate \\ \hline
$1/2^1$  & 2     & 35  & 6.2191 $10^{+0}$   &  $2.0721 10^{-2}$ & \\ \hline
$1/2^2$     &  2  & 89   &  6.0553 $10^{+1}$ & $1.8579 10^{-3}$ &3.48 \\ \hline
$1/2^3$    & 4.5   &  64    & 1.6849 $10^{+2}$ & $5.0438 10^{-4}$ &1.88 \\ \hline
$1/2^4$ & 11.5   & 151   & $1.7038 10^{+3}$      &  $2.1132 10^{-4}$& 1.25\\ \hline
\end{tabular}
\bigskip
\caption{Time marching method Test 4 $r=1, d=3$} \label{tab-comp2}
%\end{minipage}
%\bigskip
%\caption{Comparison of pseudo transient method (left) and time marching method (right), $r=1, d=3$}
\end{table}
The time listed is in seconds and obtained on an imac running Mac OS 10.6.8 with a 2.4 Ghz intel core 2 duo and 4 GB of SDRAM memory. While for small values of $h$ the time marching method appears to take significantly more time, it is also significantly more accurate. For $h=1/2^4$ the time took by the two methods is almost the same with the time marching method giving a more accurate solution.

Next we consider  a non square domain. % to contrast with results that can be obtained with the finite difference methods. 
%The choice of the domain also illustrates a difficulty with the choice of an initial guess for Newton's method. 

Test 5: we consider the unit circle discretized with a Delanauy triangulation with 824 triangles %and the test function of test 1. The triangulation is successively uniformly refined. With Newton's method a good initial guess could not be found. We use here the %time marching method with our standard initial guess, the solution of the Poisson equation $\Delta u = n f^{1/n}$ in $\Omega, u=g$ on $\partial \Omega$.
and $u(x,y)=x^2+y^2-1$ which vanishes on the boundary, Figure \ref{fig11}. 
\begin{figure}[tbp]
\begin{center}
\includegraphics[angle=0, height=4.5cm]{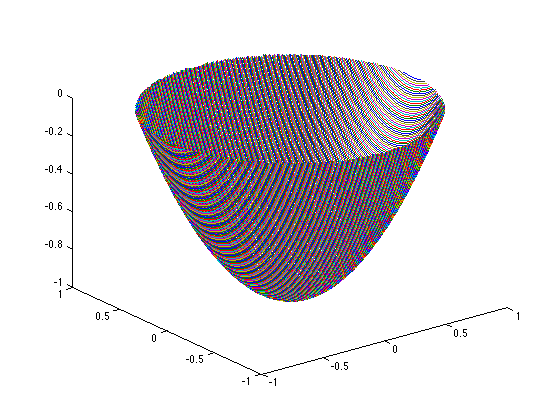}
\end{center}
%\scriptsize{
\caption{$u(x,y)=x^2+y^2-1$ on a non square domain with pseudo transient $\nu=0, r=1, d=3$
} \label{fig11}
\end{figure}

We conclude this section with a test problem for a degenerate Monge-Amp\`ere equation

Test 6: $g(x,y)=|x-1/2|$ and $f(x,y)=0$. 

The graph of the function, Figure \ref{fig2} is singular along the line $x=1/2$. The approximations been $C^1$ do appear to capture the singularity but not the convexity of the solution. Somewhat better results are obtained with another iterative method discussed in an unpublished report available at 
http://arxiv.org/ abs/1012.1775. When the time marching method is discretized by the standard finite difference method the singularity is captured correctly. We wish to discuss these results in separate works.

\begin{figure}[tbp]
\begin{center}
\includegraphics[angle=0, height=4.5cm]{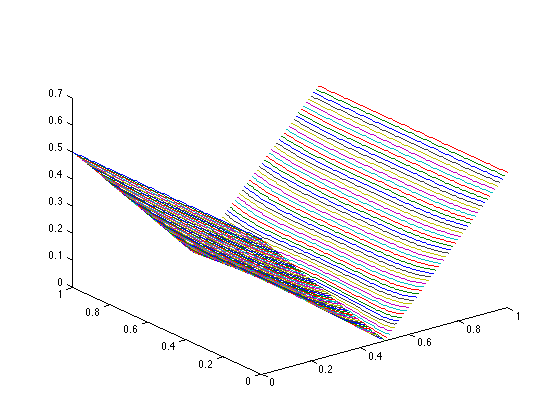} 
\includegraphics[angle=0, height=4.5cm]{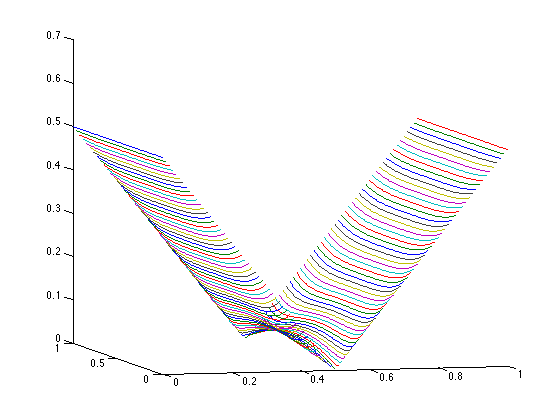}
\end{center}
%\scriptsize{
\caption{$f(x,y)=0$ and $g(x,y)=|x-1/2|$ with time marching $\nu=50, r=1, d=5, h=1/2^4$
} \label{fig2}
\end{figure}

\subsection{Heuristics on convexity preservation}
When \eqref{m1} has a smooth strictly convex solution, Theorem \ref{errorest} establishes that the approximate solution is automatically convex. The numerical experiments indicate that in the non smooth case, discrete solutions are also convex. The result can be easily explained at the continuous level (for a smooth solution).

Assume that $f > c_0 > 0$ and that the sequence $u_k$ defined by
\begin{align} \label{pseudo-Delta}
\begin{split}
 \nu \Delta u_{k+1}  + (\text{cof} \ D^2 u_k) :D^2  u_{k+1} & = \nu \Delta u_{k}  + (\text{cof} \ D^2 u_k) :D^2  u_{k} \\
 & \qquad \qquad - \det D^2 u_k +f. 
 \end{split}
\end{align}
has been shown to converge to $u$ in the H$\ddot{\text{o}}$lder space $C^{2,\beta}(\Omega)$ for some $\beta$ in $(0,1)$. From the arithmetic-geometric inequality, we have 
\begin{align*}
 \frac{( \Delta u_{k} )^n}{n^n} \geq \det D^2 u_k.
\end{align*}
By the continuity of the eigenvalues, \eqref{cont-eig}, $\Delta v$ is bounded in a neighborhood of $u$ in which all $u_k$ belong for $k$ large enough. Choose $\nu$ such that $\nu \geq (n-1)(\Delta u_k)^{n-1}/  n^n$ for all $k$ and note that the right hand of \eqref{pseudo-Delta}
is equal to $\nu \Delta u_{k}  + (n-1)  \det D^2 u_k +f$. By the assumption on $\nu$, we get  $\nu \Delta u_{k+1}  + (\text{cof} \ D^2 u_k) :D^2  u_{k+1} \geq 0$. In the limit, we obtain 
$\nu \Delta u +  (\text{cof} \ D^2 u) :D^2  u_{} \geq 0$. Since $\det D^2 u \geq 0$ by assumption, we get $\Delta u \geq 0$.

As for the time marching method 
\begin{align*}
-\nu \Delta u_{k+1} = -\nu \Delta u_{k} + \det D^2 u_k -f, \ u_{k+1}=g \ \mathrm{on} \ \partial \Omega, %\label{m2-M}
\end{align*}
assume now again that  $f > c_0>  0$ and that the sequence $u_k$ has been shown to converge to $u$ in $C^{2,\beta}(\Omega)$ for some $\beta$ in $(0,1)$.
Choose $\nu$ such that $\nu \geq (\Delta u_k)^{n-1}/ n^n$. 
%We have $-\nu \Delta u_{k+1} = -\nu \Delta u_{k} + \det D^2 u_k -f$. 
It follows from the arithmetic-geometric inequality that 
\begin{align*}
\nu \Delta u_k & \geq \frac{( \Delta u_{k} )^n}{n^n} \geq \det D^2 u_k.
\end{align*}
and so
 $-\nu \Delta u_{k} + \det D^2 u_k \leq 0$ and it follows that the time marching method also preserves the positivity of the Laplacian. 
 
In two dimensions  $\Delta u \geq 0$ and $\det D^2 u =f \geq 0$ imply that $D^2 u$ is positive.

\section*{Acknowledgements} The author would like to thank the referees for a careful reading of the paper and suggestions which led to a better presentation of the paper. %The proof of Theorem 
%\ref{errorest} is due to one of the referees.

The author acknowledges discussions with  F. Celiker,  B. Cockburn, W. Gangbo, 
R. Glowinski, M.J. Lai,
R. Nochetto, A. Oberman and A. Regev. %The proof of Lemma \ref{fundamental} given in this paper is essentially due to one of the referees. 
The author was supported in part by
NSF grants DMS-0811052,  DMS-1319640 and the Sloan Foundation. 
%The author also acknowledges the support of the IMA through its visiting fellowship program.
This research was supported in part by the Institute for Mathematics and its Applications and the Mathematical Sciences Research Institute with funds provided 
by the National Science Foundation.

\end{document}